\documentclass[11pt, twoside, a4paper]{article}
	\usepackage{amsmath,amssymb,amsthm,amscd,appendix,calrsfs,cite,color,epsfig,eucal,enumitem,dsfont,graphics,graphicx,latexsym,mathrsfs,verbatim,hyperref}
	\addtolength{\hoffset}{-.6cm} \addtolength{\textwidth}{1.2cm}
	\addtolength{\voffset}{-.5cm} \addtolength{\textheight}{1cm}
	\pagestyle{myheadings}

	\newtheorem{theorem}{Theorem}
	\newtheorem{corollary}{Corollary}
	\newtheorem{lemma}{Lemma}
	
	\newtheorem{remark}{Remark}
	
	\newtheorem{proposition}[theorem]{Proposition}

	\newcommand{\keywords}[1]{\par\addvspace\baselineskip\noindent\textbf{Keywords:}\enspace\ignorespaces#1}
	
	\newcommand{\AMSclassification}[1]{\par\addvspace\baselineskip\noindent\textbf{Mathematical subject classification:}\enspace\ignorespaces#1}
	
	\newcommand{\acknowledgment}[1]{\par\addvspace\baselineskip\noindent\textbf{Acknowledgment:}\enspace\ignorespaces#1}
	
	\DeclareMathOperator{\Supp}{Supp} 	
	\DeclareMathOperator{\Id}{Id}

	\title{Dynamical obstruction \\ to the existence of continuous sub-actions \\ for interval maps with regularly varying property}
	\author{Eduardo Garibaldi\thanks{Supported by CNPq grant 304792/2017-9.} \\ \small{Department of Mathematics, University of Campinas, 13083-859 Campinas, Brazil} \\ \small{(email: garibaldi@ime.unicamp.br)} \\ ~ \\
		Irene Inoquio-Renteria\thanks{Supported by FONDECYT 11130341 and BCH-CONICYT postdoctoral fellowship 74170014.} \\ \small{ICFM, Universidad Austral de Chile, casilla 567 Valdivia, Chile} \\
		\small{(email: ireneinoquio@uach.cl)}}
	\begin{document}
		\maketitle
		\begin{abstract}
			In ergodic optimization theory, the
			existence of sub-actions is an important tool in the study of the so-called optimizing measures. For  transformations with regularly varying property,
			we highlight a class of moduli of continuity which is not compatible with  the existence of continuous sub-actions.  Our result relies fundamentally
			on the local behavior of the dynamics near a fixed point and  applies  to interval  maps that are expanding outside an neutral fixed point, including
			Manneville-Pomeau and Farey maps.
			
			\keywords{ergodic optimization, sub-actions, modulus of continuity, non-uniformly expanding dynamics, intermittent maps.}
			
			\AMSclassification{26A12, 37E05, 49L25, 49N15.}
		\end{abstract}

		\section{Introduction}\label{S:S1}
		
		Let $T:X \to X$ be a continuous surjective map on a compact metric space $X$.
		Suppose that $f:X\to \mathbb R$ is a continuous function (called \emph{potential}).
		Let $ M(X,T)$ denote the set of $T$-invariant Borel probability measures on $X$.
		As usual the maximum ergodic average is defined as
		\begin{equation*}
		m(f, T):= \max_{\mu \in M(X,T)} \int f \; d\mu.
		\end{equation*}
		Given a potential $f:X \to \mathbb R$, a function $u:X\to \mathbb R$ is said to be a
		\emph{sub-action} for $f$ if it satisfies the cohomological inequality
		\begin{equation*}
		f + u - u\circ T \le m(f,T).		
		\end{equation*}
		The existence of sub-actions for a potential  $f$ plays an important role in the study of measures $\mu$ in~$M(X,T)$ that maximize (or minimize) the average $\int_X fd\mu$.
		The study of these measures gave rise to the ergodic optimization (see~\cite{Jen06,Jen18,Gar17} and references therein).
		
		The existence of continuous sub-actions is guaranteed when the map is uniformly expanding and the potentials have H\"older modulus of continuity
		(see~\cite{CLT01} for the context  of expanding transformations of the circle). For related studies on the existence of sub-actions, see~\cite{LT03, LT05, LRR07, GLT09}, and
		see also \cite{Sou03, BraF07, Bra08, Mor09} for results in one-dimensional dynamics.
		
		For transitive expanding dynamics, generic continuous potentials do not admit bounded measurable sub-actions (see~\cite[Theorem~C]{BJ02} and for details~\cite[Appendix]{Gar17}).
		Surprisingly there are  few cases in the literature about specific examples of non-existence of continuous sub-actions. An example is provided by  Morris~\cite[Proposition 2]{Mor07} in the context of shift spaces.
		
		Our theorem highlights a dynamical obstruction on the existence of continuous  sub-actions.
		It seems that Morris~\cite{Mor09} was the first to notice this kind of phenomenon.
		Although our result holds for interval dynamics, we are convinced that such an obstruction must occur in a similar way for multidimensional settings.
		Precisely, we deal with interval maps with a regularly varying property and we identify an associated class of moduli of continuity
		whose members  do not always admit continuous sub-actions.  We present our theorem in the following subsection.
		In the Appendix, we address the natural question of the existence of sub-actions in such a setting.
		
		\subsection{Statement of the result}\label{statements}
		Let $[0,1]$ be endowed with the standard metric on $\mathbb R$. Our dynamical setting will be interval maps~$T: [0,1]\to [0,1],$  defined  for $x$ close enough to $0$ as an invertible function  of the form $T(x):=x(1\pm V(x))$, where for  some $\sigma>0$,  the continuous and increasing function $V:[0,+\infty)\to (0,1)$ satisfies
		\begin{equation}\label{regularly variation}
		\lim_{x\to 0} \frac{V(tx)}{V(x)} = t^{\sigma}, \textrm{ for all } t>0.
		\end{equation}
		The function $V$ is said to be \emph{regularly varying at} $0$ \emph{with index} $\sigma$.
		
		By a \emph{modulus of continuity}, we mean a continuous and non-decreasing function $\omega:[0,+\infty)\to[0,+\infty)$ satisfying~$ \lim_{\epsilon \to 0} \; \omega(\epsilon) = \omega(0) = 0 $.
		Let $\mathcal M$  denote  the family of concave modulus of continuity. For a given $\omega\in \mathcal M$,  we denote by $\mathscr{C}^{\omega}([0,1])$ the space of
		functions~$\varphi:[0,1]\to~\mathbb R$  with a multiple of $\omega$ as modulus of continuity:
		$ |\varphi(x)-\varphi(y)|\le C \omega(d(x,y)) $ for some constant $ C > 0 $, for all $ x,y\in [0,1] $.	
		
		\begin{theorem}\label{t:Theorem 1}
			Let $T:[0,1]\to [0,1]$ be an interval map such that, for $x$ close to $0$, $T$ is invertible and has the form $T(x):= x(1\pm V(x))$,  where the continuous and increasing function
			$V:[0,+\infty)\to (0,1)$ is regularly varying at $ 0 $ with index $\sigma>0$.
			Suppose that $\omega\in \mathcal M$ satisfies
			\begin{equation}\label{positive liminf}
			\liminf_{x\to 0}\frac{\omega(x)}{V(x)}>0.
			\end{equation}
			Then there exists a function $f\in \mathscr{C}^{\omega}([0,1])$,  with $ m(f,T) = \int f \, d\delta_0 = f(0) $, that does not admit continuous sub-action.
		\end{theorem}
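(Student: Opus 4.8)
The plan is to manufacture $f$ from the germ of $T$ at the neutral fixed point, reducing everything to bookkeeping there. Normalise $f(0)=0$; the target is then $f\in\mathscr{C}^\omega([0,1])$ with $\int f\,d\mu\le 0$ for all $\mu\in M([0,1],T)$ (so that $m(f,T)=f(0)=\int f\,d\delta_0$) and no continuous sub-action. I treat the expanding germ $T(x)=x(1+V(x))$ near $0$, the case relevant to Manneville--Pomeau and Farey maps; the contracting germ $T(x)=x(1-V(x))$ is symmetric upon reversing time. Fix $\delta>0$ with $T$ invertible and strictly increasing on $[0,\delta]$, let $g=T^{-1}$ be the inverse branch fixing $0$, set $y_0=\delta$ and $y_{n+1}=g(y_n)$, so the fundamental domains $I_n=[y_{n+1},y_n]$ tile $(0,\delta]$ and $T$ maps $I_{n+1}$ bijectively onto $I_n$ with $y_n-y_{n+1}=y_{n+1}V(y_{n+1})$. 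Choosing coordinates $\kappa_n\colon[0,1]\to I_n$ with $T\circ\kappa_{n+1}=\kappa_n$ (i.e. $\kappa_n=g^n\circ\kappa_0$), a forward orbit keeps its $[0,1]$-coordinate while descending, so an orbit entering $(0,\delta]$ at $\kappa_m(\theta)$ performs a monotone climb whose $f$-sum is $\sum_{n=0}^{m}f(\kappa_n(\theta))$. Regular variation gives two facts I will use: in the coordinate $\tau=\log(1/y)$ one has $\sum_{n\le N}h(y_n)\asymp\int h(y(\tau))\,d\tau/V(y(\tau))$; and $1/(sV(s))$ has index $-1-\sigma<-1$, so a point at depth $r$ needs $\asymp\int_r^\delta ds/(sV(s))\asymp 1/V(r)\to\infty$ iterates to climb out of $(0,\delta]$.

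I then prescribe $f$ by its values $v_n:=f(y_n)$, extended to each $I_n$ by the linear interpolation between $v_{n+1}$ and $v_n$: take $v_0=0$ and $v_n=0$ except on a disjoint sequence of index blocks $B_k$ lying at ever smaller scales, on each of which $v_n$ ramps up to order $-V(y_n)$ (over $O(1)$ units of $\tau$), stays of order $-V(y_n)$ along a long downswing, then of order $+V(y_n)$ along a matching upswing, then ramps back to $0$, tuned so that the partial sums $R_N:=\sum_{n=0}^{N}v_n$ return to $0$ after each block (zero net) but descend to depth $-b_k$ inside it, with $b_k\to\infty$. By the regular-variation dictionary $B_k$ has $\tau$-length $\asymp b_k$, and since $|v_n|\asymp V(y_n)$ with bounded constant, hypothesis \eqref{positive liminf} ($\omega\ge cV$ near $0$, with $\omega\in\mathcal M$ concave) is exactly what makes the interpolated $f$ lie in $\mathscr{C}^\omega([0,1])$ when $\sigma\le 1$ (when $\sigma>1$, $V$ is Lipschitz near $0$ and the same works more easily); concavity of $\omega$ also gives $\mathrm{Lip}\subset\mathscr{C}^\omega$, allowing $f$ to be extended off $[0,\delta]$ so as to be Lipschitz, non-positive, and vanishing at the fixed points of $T$ (in particular $f(1)=0$), using that $T$ is expanding there.

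Two verifications remain. For $m(f,T)=f(0)$: a complete climb sum equals $(1-\theta)R_{m+1}+\theta R_m\le 0$ because every $R_N\le 0$, and $f\le 0$ on $[\delta,1]$; hence every closed orbit has non-positive cyclic sum, and decomposing a $\mu$-typical orbit (Birkhoff) into complete climbs and intervening stretches in $[\delta,1]$ yields $\int f\,d\mu\le 0$ for ergodic $\mu\ne\delta_0$, with equality for $\delta_0$. For the non-existence of a continuous sub-action: if $u$
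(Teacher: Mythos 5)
Your proposal stops exactly where the theorem begins: the sentence ``For the non-existence of a continuous sub-action: if $u$'' is left unfinished, and no argument is given. This is not a cosmetic omission --- the entire point of the construction is to produce orbit segments with both endpoints tending to $0$ along which the Birkhoff sums of $f$ stay bounded away from $0$ (or tend to $+\infty$), so that telescoping $f\le u\circ T-u$ forces $u(y_{N_1})-u(y_{N_2})\ge R_{N_2}-R_{N_1}\ge b_k\to\infty$ while $y_{N_1},y_{N_2}\to 0$, contradicting continuity of $u$ at the fixed point. With your zero-net blocks dipping to depth $-b_k$ this conclusion is within reach, but you must actually exhibit it, and in particular verify that hypothesis~\eqref{positive liminf} is what lets you take $b_k\to\infty$ while keeping $f\in\mathscr C^{\omega}$: the binding constraint is $|f(y_n)-f(0)|\asymp V(y_n)\le C\,\omega(y_n)$, which is precisely~\eqref{positive liminf}, together with the local slope estimate $V(y_n)/y_n$ against $\omega(h)\ge (h/y_n)\,\omega(y_n)$ from concavity. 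None of this is on the page. A second, smaller gap: your identity ``a complete climb sum equals $(1-\theta)R_{m+1}+\theta R_m$'' holds only if $f$ is interpolated linearly in the dynamical coordinate $\theta$ (i.e.\ $f(\kappa_n(\theta))=(1-\theta)v_{n+1}+\theta v_n$ with $\kappa_n=g^n\circ\kappa_0$), not in the spatial coordinate $x$ as you define it; the maps $\kappa_n$ are not affine, so either you change the definition of $f$ (and then redo the modulus check, controlling the distortion of $g^n$ on $I_0$) or you must bound the error terms in the climb sums. The handling of incomplete climbs in the empirical-measure argument for $m(f,T)\le 0$, and the claim that the contracting germ is ``symmetric upon reversing time'' (time reversal is only available locally; $m(f,T)$ and the invariant measures are global objects), are also asserted rather than proved.

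For what it is worth, the skeleton you chose is genuinely different from the paper's and has real merit. The paper builds $f=\Phi\cdot\omega(d(\cdot,S))$ from a partition of unity with alternating positive bumps and negative bumps scaled by a large parameter $\xi$, proves the lower bound on Birkhoff sums via the cardinality estimate of Lemma~\ref{minoracao cardinalidade}, and establishes $m(f,T)=0$ by a delicate counting argument showing each orbit eventually accumulates enough negative mass (Subsection~\ref{s:Sub3}, requiring the choices~\eqref{escolha de gama}--\eqref{comportamento para todo k}). Your design --- prescribing $f$ on the backward orbit with zero net sum per block and all partial sums $\le 0$ --- would, if completed, make $m(f,T)=0$ essentially automatic and replace the fixed positive constant $C_5$ by sums tending to infinity, which is cleaner. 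But as submitted the proof is incomplete at its crux and cannot be accepted.
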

		
		The main novelty of our general result is the clear identification of condition~\eqref{positive liminf} as an obstruction to the existence of continuous sub-actions.
		An immediate question is whether the opposite condition, that is, a null limit inferior would be sufficient to ensure existence.
		As a complement of discussion as well as an initial answer, we provide an example of existence result in the Appendix (see Theorem~\ref{theorem existence}):
		by considering certain maps with an indifferent fixed point and a stronger assumption than a null limit inferior,
		we show that sub-actions do exist and we highlight their associated regularity.

		In the following Subsection, we give examples of applications of Theorem~\ref{t:Theorem 1}.
		We gather in Section~\ref{Preliminares} preliminary results.
		In Section~\ref{Section proof of Theorem}, we present the proof of Theorem~\ref{t:Theorem 1}.
		In the Appendix, we detail the context and the proof of an existence result inspired  by a construction due to Contreras, Lopes and Thieullen \cite[Proposition 11]{CLT01}.
		
		\subsection{Examples}
		
		A trivial example of elements of $\mathcal M$ are the functions $\omega(h)=C h^\alpha$ with $\alpha\in(0,1]$, which describe $\alpha$-H\"older continuous functions.
		The family $\mathcal M$ also includes the minimal concave majorants $\omega_0$ of non-decreasing subadditive functions $\omega:[0,+\infty)\to~[0,+\infty)$, with $ \lim_{h \to 0} \omega(h)=\omega(0)=0.$
		Following~\cite{Med01} these concave majorants are infinitely differentiable on $(0,+\infty)$. Moreover, if $\omega'(0)<\infty$ then $\omega_0(h)=\omega'(0) h$ on some neighborhood of $0$.
		
		Another example of members of $\mathcal M$ are the functions $\omega(h)=h\left(\log\left(\frac{1}{h^k}\right)+1\right)$ (for $ k > 0 $ and $ h $ small enough),
		which describe locally H\"older continuous functions.
		A more general class of modulus of continuity in $\mathcal M$ is defined as follows: for  $0\le\alpha<1$ and $\beta\ge 0$ with $ \alpha + \beta > 0 $, consider $\omega_{\alpha,\beta}:[0,+\infty)\to [0,+\infty)$ given as
		\begin{equation}\label{modulus alpha beta}	
		\omega_{\alpha,\beta}(h):=\left\{
		\begin{array}{ll}
		h^\alpha(-\log h)^{-\beta},&0< h< h_0,  \\
		h_0^\alpha(-\log h_0)^{-\beta},& h\ge h_0,
		\end{array}
		\right.
		\end{equation}
		where $h_0$ is taken small enough so that $\omega_{\alpha,\beta}$ is concave.
		Note that $\omega_{\alpha,0}$ is reduced to the H\"older continuity, and $\omega_{0,\beta}$ for $\beta>0$ determines a class that is 	
		larger than local H\"older continuity -- see property~\eqref{quotient to infinite}.	
		
		\begin{remark}
			Let~$\omega_{\alpha,\beta}:[0,+\infty)\to [0,+\infty)$ be the modulus of continuity defined in~(\ref{modulus alpha beta}).	It is easy to see that
			for every $\epsilon>\alpha$,
			\begin{equation}\label{quotient to infinite}
			\displaystyle \lim_{h\to 0}\frac{\omega_{\alpha,\beta}(h)}{h^{\epsilon}}= +\infty.
			\end{equation}
		\end{remark}
		
		Note that  $\mathcal M$ includes many functions besides the previous examples for the simple fact that for each pair  $\omega_1, \omega_2\in \mathcal M$,
		we have  $\omega_1\circ\omega_2\in \mathcal M.$
		However, we are interested in a class of modulus of continuity whose behavior near $0$ satisfies condition~(\ref{positive liminf}), which is dictated by the dynamics.
		
		\medskip
		Let $V:[0,+\infty)\to (0,1)$  be a continuous and increasing function  which is regularly varying at $ 0 $ with index $\sigma>0$.
		Consider the modulus of continuity $\omega_{\alpha,\beta}$ defined in~(\ref{modulus alpha beta}) with $0 \le \alpha < \min\{\sigma, 1\} $ and $\beta \ge 0$  such that  $\alpha+\beta>0$.
		Thanks to property~(\ref{quotient to infinite}), the condition $\liminf_{x\to 0}\frac{\omega_{\alpha,\beta}(x)}{V(x)}>0$ holds whenever
		$\liminf_{x\to 0}\frac{x^\sigma}{V(x)}>~0$.  Therefore, we obtain the following corollary.
		
		\begin{corollary}\label{Corollary 1}
			Let $T:[0,1]\to [0,1]$ be an interval map such that in a neighborhood of the origin
			$T$ is invertible and has the form $T(x) = x(1\pm V(x))$,
			where $V:~[0,+\infty)~\to~(0,1)$ is a continuous, increasing and regularly varying function at $ 0 $ with index $\sigma>0$	
			that satisfies $\liminf_{x\to 0}\frac{x^{\sigma}}{V(x)}>0.$ Let $\omega_{\alpha,\beta}(x)$ be defined as in~(\ref{modulus alpha beta}). Then, for $ \alpha = \sigma $ and $ \beta = 0 $ or
			for $ 0 \le \alpha < \min\{\sigma, 1\} $ and $\beta \ge 0$  with  $\alpha+\beta>0$,  there is a function~$f\in\mathscr{C}^{\omega_{\alpha,\beta}}([0,1])$ which does not admit continuous sub-action.
		\end{corollary}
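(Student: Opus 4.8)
The plan is to obtain Corollary~\ref{Corollary 1} as a direct specialization of Theorem~\ref{t:Theorem 1}: all one has to do is check, in each of the two exponent regimes, that $\omega_{\alpha,\beta}$ is a concave modulus of continuity and that it satisfies the decisive hypothesis~\eqref{positive liminf} relative to $V$. Since in the corollary $T$ is already assumed to be invertible near $0$ with $T(x)=x(1\pm V(x))$ for $V$ continuous, increasing and regularly varying at $0$ with index $\sigma>0$, the dynamical hypotheses of Theorem~\ref{t:Theorem 1} hold verbatim, and the matter reduces to a statement about the pair $(\omega_{\alpha,\beta},V)$. That $\omega_{\alpha,\beta}\in\mathcal M$ is essentially built into~\eqref{modulus alpha beta}: for $h_0$ small it is concave, it is plainly continuous and non-decreasing, and $\alpha+\beta>0$ forces $\omega_{\alpha,\beta}(h)\to 0$ as $h\to 0$. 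In the first alternative, $\alpha=\sigma$ and $\beta=0$ (which presupposes $\sigma\le 1$, since otherwise $x\mapsto x^{\sigma}$ is not concave), one simply has $\omega_{\sigma,0}(x)=x^{\sigma}$ for $x$ near $0$.

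It then remains to verify~\eqref{positive liminf}. For the first alternative this is immediate: for small $x$, $\omega_{\sigma,0}(x)/V(x)=x^{\sigma}/V(x)$, so the standing assumption $\liminf_{x\to 0}x^{\sigma}/V(x)>0$ is precisely~\eqref{positive liminf}. For the second alternative, $0\le\alpha<\min\{\sigma,1\}$ and $\beta\ge 0$ with $\alpha+\beta>0$, I would factor, for $x$ small,
\begin{equation*}
\frac{\omega_{\alpha,\beta}(x)}{V(x)}=\frac{\omega_{\alpha,\beta}(x)}{x^{\sigma}}\cdot\frac{x^{\sigma}}{V(x)}.
\end{equation*}
Because $\alpha<\sigma$, property~\eqref{quotient to infinite} of the Remark, applied with $\epsilon=\sigma$, gives $\lim_{x\to 0}\omega_{\alpha,\beta}(x)/x^{\sigma}=+\infty$; combined with $\liminf_{x\to 0}x^{\sigma}/V(x)>0$ this yields $\liminf_{x\to 0}\omega_{\alpha,\beta}(x)/V(x)=+\infty$, so~\eqref{positive liminf} holds a fortiori. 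In either case Theorem~\ref{t:Theorem 1} applies and produces $f\in\mathscr{C}^{\omega_{\alpha,\beta}}([0,1])$ with $m(f,T)=f(0)$ admitting no continuous sub-action, which is the assertion.

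There is no genuinely hard step: the statement is a corollary in the literal sense, and the only point deserving attention is the second regime, where one must use the growth property~\eqref{quotient to infinite} — that is, the fact that $\omega_{\alpha,\beta}$ outpaces every power $x^{\epsilon}$ with $\epsilon>\alpha$ — in order to compare $\omega_{\alpha,\beta}$ with $V$ through the intermediate power $x^{\sigma}$. One should also keep in mind the implicit constraint $\sigma\le 1$ attached to the first alternative.
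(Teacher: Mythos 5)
Your proposal is correct and follows essentially the same route as the paper: the authors likewise dispose of the case $\alpha=\sigma$, $\beta=0$ by direct substitution and, for $0\le\alpha<\min\{\sigma,1\}$, invoke property~\eqref{quotient to infinite} with $\epsilon=\sigma$ to deduce \eqref{positive liminf} from $\liminf_{x\to 0}x^{\sigma}/V(x)>0$, after which Theorem~\ref{t:Theorem 1} applies. Your added remark on the implicit constraint $\sigma\le 1$ in the first alternative is a sensible point of hygiene that the paper leaves tacit.
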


		Examples of this kind of dynamics include Manneville-Pomeau interval map:
		for a given $s>0$, $ T_s:[0,1]\to [0,1]$ is defined as
		$$  T_s(x):= x(1 + x^s) \mod 1. $$
		Note that $ T'_s(x)\ge 1$ for all $ x $ with equality only at $x=0$.
		Let $c $ be the unique point in $ (0,1)$ such that $ T_s(c)=1$ and
		$ T_s\vert_{[0,c]}:[0,c]\to [0,1]$ is a diffeomorphism.
		Let us denote $ U_s:[0,1]\to [0,c]$ the corresponding inverse branch. Note that
		$U_s'(x)\le 1$ for  all $x$ and $U_s$ is concave, so that
		$cx \le U_s(x)\le x$. 	
		If we write $U_s(x)= x(1-V(x))$, then
		$	0\leq V(x)\le 1-c. $
		Moreover, by using  the identity $T_s\circ U_s=\Id$, we have
		$V(x)= x^s(1-V(x))^{s+1}$ for all $x\neq 0.$ Hence  $\lim_{x\to 0}V(x)=0$,
		\begin{align*}
		\lim_{x\to 0}\frac{V(tx)}{V(x)}=\lim_{x\to 0}t^s\left(\frac{1-V(tx)}{1-V(x)}\right)^{s+1}= t^s
		\textrm{ and }
		\lim_{x\to 0}\frac{x^s}{V(x)}=\lim_{x\to 0}\frac{1}{(1-V(x))^{s+1}}=1.
		\end{align*}
		It is not difficult to argue that $ V $ is increasing. Then Corollary~\ref{Corollary 1} applies to $U_s$ as well.

		\begin{corollary}
			Let $s\in (0,1)$ and  $ T_s(x)= x+x^{1+s} $ for $x$ close enough to $0$.  Denote $U_s$ the corresponding inverse branch. Let $\omega_{\alpha,\beta}(x)$ be defined as  in~(\ref{modulus alpha beta}), where either  $\alpha\in [0,\min\{s,1\})$ and $\beta\ge 0$ with   $\alpha+\beta> 0$ or $\alpha=s$ and $\beta=0$. Then there are functions $f,g \in \mathscr{C}^{\omega_{\alpha,\beta}}([0,1])$  which do  not admit continuous sub-actions with respect to $T_s$ and $U_s$, respectively.
		\end{corollary}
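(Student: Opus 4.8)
The plan is to read off both assertions from Corollary~\ref{Corollary 1}, applied once to $T_s$ and once to its inverse branch $U_s$; near the origin $T_s$ has the form $x(1+V(x))$ and $U_s$ the form $x(1-V(x))$, so both fall under the ``$\pm$'' framework of Theorem~\ref{t:Theorem 1}. Since $s\in(0,1)$, identifying the relevant index as $\sigma=s$ gives $\min\{\sigma,1\}=\min\{s,1\}=s$; hence the two parameter regimes in the statement --- namely $\alpha\in[0,s)$ with $\beta\ge0$ and $\alpha+\beta>0$, and $\alpha=s$ with $\beta=0$ --- are precisely those allowed by Corollary~\ref{Corollary 1}. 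So the entire proof amounts to verifying that $T_s$ and $U_s$ meet the hypotheses of that corollary near $0$, and this is already essentially carried out in the paragraph preceding it; I would simply cite that discussion.

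Concretely, for $T_s$ one has near $0$ that $T_s(x)=x+x^{1+s}=x(1+V(x))$ with $V(x)=x^s$, which is continuous, increasing, regularly varying at $0$ with index $\sigma=s$, and satisfies $\liminf_{x\to0}x^\sigma/V(x)=1>0$; Corollary~\ref{Corollary 1} then furnishes $f\in\mathscr{C}^{\omega_{\alpha,\beta}}([0,1])$ (with $m(f,T_s)=f(0)$ by Theorem~\ref{t:Theorem 1}) admitting no continuous sub-action. For $U_s$, writing $U_s(x)=x(1-V(x))$ near $0$, the identity $T_s\circ U_s=\Id$ gives $V(x)=x^s(1-V(x))^{1+s}$ for $x\ne0$, whence $V(x)\to0$, $\lim_{x\to0}V(tx)/V(x)=t^s$ (so $V$ is regularly varying at $0$ with index $\sigma=s$), and $\lim_{x\to0}x^s/V(x)=1$, so that condition~\eqref{positive liminf} holds for each admissible $\omega_{\alpha,\beta}$ --- for $\alpha<s$ via~\eqref{quotient to infinite} with $\epsilon=s$, and directly when $\alpha=s$, $\beta=0$. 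That $V$ is increasing follows because $T_s''(x)=(1+s)s\,x^{s-1}>0$ makes $T_s$ strictly convex on a right neighborhood of $0$, hence $U_s$ strictly concave there with $U_s(0)=0$, so $x\mapsto U_s(x)/x$ is decreasing and $V=1-U_s(x)/x$ is increasing. Thus Corollary~\ref{Corollary 1} also yields $g\in\mathscr{C}^{\omega_{\alpha,\beta}}([0,1])$ with no continuous sub-action for $U_s$.

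The one caveat worth stating is that the inverse branch $U_s$ maps $[0,1]$ into $[0,c]$ and so is not surjective onto $[0,1]$; but surjectivity plays no role in the construction of Theorem~\ref{t:Theorem 1}, which is purely local around the fixed point $0$ (where $U_s(0)=0$, $\delta_0$ is invariant and $m(g,U_s)=g(0)$), so the conclusion for $U_s$ is unaffected. In the end the only genuine --- and minor --- verifications are the monotonicity of the $V$ attached to $U_s$ and the bookkeeping that the two stated parameter ranges coincide with those of Corollary~\ref{Corollary 1}; everything else is a direct citation, so I expect no serious obstacle.
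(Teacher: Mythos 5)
Your proposal is correct and follows essentially the same route as the paper: the paper also derives this corollary by applying Corollary~\ref{Corollary 1} to $T_s$ with $V(x)=x^s$ and to $U_s$ with $V$ defined via $U_s(x)=x(1-V(x))$, using the identity $T_s\circ U_s=\Id$ to get regular variation with index $s$ and $\lim_{x\to0}x^s/V(x)=1$, exactly as you do. Your concavity argument for the monotonicity of $V$ fills in the paper's ``it is not difficult to argue'' remark, and the surjectivity caveat for $U_s$ is a fair observation that the paper leaves implicit.
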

		The above corollary is an extension of Morris' result~\cite{Mor09}, which established  that for  $ T_s(x)= x+x^{1+s} \mod 1$, there is $f\in \mathscr C^{\omega_{s,0}}([0,1])$ that does not admit continuous sub-action.

		Another one-parameter family of maps on the interval $[0,1]$ with indifferent fixed point at $x=0$ is defined as follows: for  $\rho\in (0,1]$, let
		$ F_\rho:[0,1]\to [0,1]$ be given as
		\begin{equation*}
		F_\rho(x) =\left\{
		\begin{array}{ll}
		\frac{x}{(1-x^{\rho})^{1/\rho}}  & \textrm{ if } 0\le x\le 2^{-1/\rho}\\
		\frac{(1-x^{\rho})^{1/\rho}}{x} &\textrm{ if }  2^{-1/\rho}<x\le 1.
		\end{array}
		\right.
		\end{equation*}
		Note that Farey map corresponds to the special case $\rho=1$. For any $\rho\in (0,1]$, the first inverse branch has an explicit expression: $ G_\rho(x) = \frac{x}{(1+x^{\rho})^{1/\rho}} $.
		Note then that the functions  $V(x)= \frac{1}{(1-x^\rho)^{1/\rho}}-1$ and $W(x)= 1-\frac{1}{(1+x^\rho)^{1/\rho}}$ are continuous, increasing, regularly varying with index~$\rho$, and satisfy
		$\lim_{x\to 0}\frac{x^\rho}{V(x)}= \lim_{x\to 0}\frac{x^\rho}{W(x)}=\rho>0.$ Clearly, $ F_\rho(x) = x(1 + V(x)) $ and $ G_\rho(x) = x(1 - W(x)) $.

		\begin{corollary}
			For $\rho\in(0,1]$, let $F_\rho(x) = \frac{x}{(1-x^{\rho})^{1/\rho}} $ and  $G_\rho(x)=\frac{x}{(1+x^{\rho})^{1/\rho}}$ for $x$ close to $0$. Let $\omega_{\alpha,\beta}(x)$ be defined as in~(\ref{modulus alpha beta}), where either $\alpha\in [0,\rho)$ and $\beta\ge 0$ with   $\alpha+\beta> 0$ or $\alpha=\rho$ and $\beta=0$. Then there are functions $f, g\in \mathscr{C}^{\omega_{\alpha,\beta}}([0,1])$   which do not admit continuous sub-actions with respect to $F_\rho$ and $G_\rho$, respectively.
		\end{corollary}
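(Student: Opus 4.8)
The plan is to obtain this statement as a direct application of Corollary~\ref{Corollary 1}, so the work reduces to checking that both $F_\rho$ and $G_\rho$ satisfy its hypotheses with $\sigma=\rho$. I would start from the local descriptions already recorded just before the statement: in a neighborhood of $0$ one has $F_\rho(x)=x(1+V(x))$ with $V(x)=(1-x^\rho)^{-1/\rho}-1$, and $G_\rho(x)=x(1-W(x))$ with $W(x)=1-(1+x^\rho)^{-1/\rho}$. Here $G_\rho$ is the first inverse branch of $F_\rho$, hence maps $[0,1]$ into $[0,2^{-1/\rho}]\subset[0,1]$ and is increasing, so it is a genuine invertible interval map near $0$; likewise $F_\rho$ maps $[0,1]$ onto $[0,1]$ and is increasing near $0$. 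The function $W$ is continuous, increasing and takes values in $(0,1)$ on all of $[0,+\infty)$, whereas $V$ has these properties only on a neighborhood of $0$ (it is unbounded as $x\uparrow 1$ when $\rho<1$). Since the conclusion of Theorem~\ref{t:Theorem 1} depends only on the germ of the dynamics at the fixed point $0$, I would simply replace $V$ outside a small interval $[0,\delta]$ by an arbitrary continuous increasing extension with values in $(0,1)$, which changes nothing relevant.

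Next I would verify the two quantitative conditions. From $(1\mp x^\rho)^{-1/\rho}=1\pm\tfrac{1}{\rho}x^\rho+O(x^{2\rho})$ as $x\to 0$ one gets $V(x)\sim\tfrac{1}{\rho}x^\rho$ and $W(x)\sim\tfrac{1}{\rho}x^\rho$, so that for every $t>0$
\[
\lim_{x\to 0}\frac{V(tx)}{V(x)}=\lim_{x\to 0}\frac{W(tx)}{W(x)}=t^\rho,
\]
that is, $V$ and $W$ are regularly varying at $0$ with index $\sigma=\rho>0$, and moreover
\[
\liminf_{x\to 0}\frac{x^\rho}{V(x)}=\liminf_{x\to 0}\frac{x^\rho}{W(x)}=\rho>0 .
\]
These are exactly the identities displayed in the paragraph that introduces $F_\rho$, so this step is pure bookkeeping.

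Finally, since $\rho\in(0,1]$ we have $\min\{\sigma,1\}=\min\{\rho,1\}=\rho$, whence the admissible pairs in Corollary~\ref{Corollary 1} — namely $\alpha=\sigma$ and $\beta=0$, or $0\le\alpha<\min\{\sigma,1\}$ and $\beta\ge 0$ with $\alpha+\beta>0$ — coincide precisely with the pairs $(\alpha,\beta)$ allowed in the statement. Applying Corollary~\ref{Corollary 1} to $T=F_\rho$ with the function $V$ yields a potential $f\in\mathscr{C}^{\omega_{\alpha,\beta}}([0,1])$ with no continuous sub-action for $F_\rho$, and applying it to $T=G_\rho$ with the function $W$ yields the analogous $g$ for $G_\rho$. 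I do not foresee any genuine difficulty here: the only points deserving a word of care are the harmless truncation of $V$ away from $0$ needed to match the literal global hypotheses of Corollary~\ref{Corollary 1}, and the remark that $G_\rho$ indeed maps $[0,1]$ into itself and is invertible near $0$ as an inverse branch of the increasing map $F_\rho$.
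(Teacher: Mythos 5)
Your proposal is correct and follows exactly the route the paper intends: the paragraph preceding the corollary records that $V(x)=(1-x^\rho)^{-1/\rho}-1$ and $W(x)=1-(1+x^\rho)^{-1/\rho}$ are continuous, increasing, regularly varying at $0$ with index $\rho$ and satisfy $\lim_{x\to 0}x^\rho/V(x)=\lim_{x\to 0}x^\rho/W(x)=\rho>0$, after which the statement is a direct instance of Corollary~\ref{Corollary 1} with $\sigma=\rho$ (so $\min\{\sigma,1\}=\rho$). Your extra remarks about truncating $V$ away from $0$ and about $G_\rho$ being the increasing inverse branch are harmless bookkeeping consistent with the ``for $x$ close to $0$'' hypothesis.
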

		
		As a final example of application of our theorem, let
		\begin{equation*}
		T(x) =\left\{
		\begin{array}{ll}
		0 & \textrm{ if } x = 0 \\
		x+ \frac{2}{\log 2} \, x^2 \vert \log x\vert  & \textrm{ if } 0 < x\le 1/2 \\
		2x-1 &\textrm{ if } 1/2 < x \le 1.
		\end{array}
		\right.
		\end{equation*}
		Note that $V(x)=\frac{2}{\log 2} \, x\vert \log x\vert$, $ x > 0 $, is a regularly varying function with index~$1$. For $k>0$, the concave modulus of continuity
		defined for $ h $ sufficiently small as $\omega(h)=h\left(\log\left(\frac{1}{h^k}\right)+1\right)$
		clearly satisfies $\lim_{x\to 0}\frac{\omega(x)}{V(x)}=\frac{2k}{\log 2}>0$. Recalling that such a modulus describes locally H\"older continuous functions, we have the following result.
		\begin{corollary}
			With respect to a dynamics that behaves as $T(x)= x+ \frac{2}{\log 2} \, x^2 \vert \log x\vert $ for $ x > 0 $ sufficiently small, there exist locally H\"older continuous functions that do not admit continuous sub-actions.
		\end{corollary}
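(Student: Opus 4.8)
The plan is to obtain this corollary as an immediate instance of Theorem~\ref{t:Theorem 1}, so the work reduces to checking the two hypotheses.

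First, I would verify that the given dynamics fits the framework. Near $0$ we have $T(x) = x\bigl(1+V(x)\bigr)$ with $V(x) = \frac{2}{\log 2}\,x\lvert\log x\rvert$ and $V(0) := 0$. One checks that $V$ is continuous, that it is increasing on a right neighborhood of $0$ --- since $-x\log x$ is increasing on $(0,1/e)$ --- and that it is regularly varying at $0$ with index $\sigma = 1$: for every $t > 0$,
\[
\lim_{x\to 0}\frac{V(tx)}{V(x)} = \lim_{x\to 0} t\,\frac{\lvert\log t + \log x\rvert}{\lvert\log x\rvert} = t = t^{1}.
\]
Since Theorem~\ref{t:Theorem 1} depends only on the germ of $T$ at $0$, it is harmless that $V$ as written is not literally $(0,1)$-valued; one may pass to the restriction of $V$ to a small interval, or rescale, without affecting anything. (The displayed formula for $T$ on all of $[0,1]$ is in any case a full-branch map that is expanding away from the neutral fixed point $0$, so the setting of the theorem is genuinely realized.)

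Second, I would choose the modulus of continuity. Fix $k > 0$ and let $\omega \in \mathcal M$ be the concave, non-decreasing extension to $[0,+\infty)$ of $h \mapsto h\bigl(\log(1/h^{k}) + 1\bigr) = h\bigl(k\lvert\log h\rvert + 1\bigr)$, this formula holding for $h$ in a right neighborhood of $0$; as recalled in the discussion of examples, $\mathscr{C}^{\omega}([0,1])$ is then precisely the space of locally H\"older continuous functions. The key point is condition~\eqref{positive liminf}, which is a one-line computation:
\[
\lim_{x\to 0}\frac{\omega(x)}{V(x)} = \lim_{x\to 0}\frac{x\bigl(k\lvert\log x\rvert + 1\bigr)}{\tfrac{2}{\log 2}\,x\lvert\log x\rvert} = \frac{k\log 2}{2} > 0 ,
\]
so in particular $\liminf_{x\to 0}\omega(x)/V(x) > 0$.

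With both hypotheses in hand, Theorem~\ref{t:Theorem 1} yields a function $f \in \mathscr{C}^{\omega}([0,1])$ with $m(f,T) = \int f\, d\delta_{0} = f(0)$ that admits no continuous sub-action; since $\mathscr{C}^{\omega}([0,1])$ is the class of locally H\"older continuous functions, this $f$ is exactly what the corollary asserts. There is no genuine obstacle here: the only steps requiring (routine) care are pinning down the index of regular variation and observing that renormalizing $V$ into $(0,1)$ is immaterial.
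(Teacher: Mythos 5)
Your argument is correct and coincides with the paper's own (one-paragraph) justification: identify $V(x)=\tfrac{2}{\log 2}\,x\lvert\log x\rvert$ as regularly varying at $0$ with index $1$, check that the locally H\"older modulus $\omega(h)=h\bigl(\log(1/h^k)+1\bigr)$ satisfies condition~\eqref{positive liminf}, and invoke Theorem~\ref{t:Theorem 1}. (Your value $\tfrac{k\log 2}{2}$ for the limit is in fact the correct one --- the paper writes $\tfrac{2k}{\log 2}$ --- but only positivity matters, so this discrepancy is immaterial.)
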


		\section{Preliminaries}\label{Preliminares}
		
		\begin{subsection}{Some facts about modulus of continuity}
			
			Recall that $\mathcal M$ denotes the family of concave modulus of continuity.
			Note that, given a  non-identically null $\omega \in \mathcal M$, then $([0,1],\omega\circ d)$ is a metric space. Indeed, the subadditivity  of $\omega$ follows from its concavity and thus, since $\omega$ is non-decreasing,
			we obtain the triangle inequality:		
			\begin{equation*}
			\omega(d(x,y))\le \omega(d(x,z))+\omega(d(z,y))\quad  \forall \, x,y,z\in [0,1].
			\end{equation*}
			In particular, a function $\varphi:[0,1]\to \mathbb R$ with modulus of continuity  $\omega\in \mathcal M$ is nothing else than a Lipschitz function with respect to the metric $\omega\circ d.$
			
			We will use the following property.
			\begin{lemma}\label{modulo constante}
				Let $\omega\in \mathcal M$. For any positive constant $\chi,$ we have
				\begin{equation*}
				\frac{\chi}{1+\chi} \omega(h) \le \omega(\chi h)\le (\chi+1)\omega(h).
				\end{equation*}
			\end{lemma}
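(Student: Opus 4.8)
The plan is to treat the two inequalities separately, using concavity and monotonicity of $\omega$ together with the elementary fact that a concave function $\omega$ with $\omega(0)=0$ is subadditive and, more precisely, satisfies $\omega(\lambda h)\le \lambda\,\omega(h)$ for $\lambda\ge 1$ and $\omega(\lambda h)\ge \lambda\,\omega(h)$ for $0\le\lambda\le 1$. I would first record this scaling behaviour: writing $h = \frac{1}{\lambda}(\lambda h) + (1-\frac{1}{\lambda})\cdot 0$ as a convex combination for $\lambda\ge 1$ and applying concavity yields $\omega(h)\ge \frac{1}{\lambda}\omega(\lambda h)$, i.e. $\omega(\lambda h)\le \lambda\,\omega(h)$; the reverse inequality for $0<\lambda<1$ follows by the same argument with the roles reversed (or by substituting $\lambda\mapsto 1/\lambda$).

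For the upper bound $\omega(\chi h)\le(\chi+1)\omega(h)$, if $\chi\le 1$ then monotonicity alone gives $\omega(\chi h)\le\omega(h)\le(\chi+1)\omega(h)$; if $\chi>1$ then the scaling inequality gives $\omega(\chi h)\le\chi\,\omega(h)\le(\chi+1)\omega(h)$. So the factor $\chi+1$ is a uniform bound that works in both regimes. For the lower bound $\frac{\chi}{1+\chi}\omega(h)\le\omega(\chi h)$, the natural move is to apply the upper bound with $h$ and $\chi$ interchanged: from $\omega(h)=\omega\big(\frac{1}{\chi}\cdot\chi h\big)\le\big(\frac{1}{\chi}+1\big)\omega(\chi h)=\frac{1+\chi}{\chi}\,\omega(\chi h)$, and rearranging yields exactly $\frac{\chi}{1+\chi}\omega(h)\le\omega(\chi h)$.

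I do not anticipate a genuine obstacle here; the only point requiring a little care is justifying the scaling inequality $\omega(\lambda h)\le\lambda\omega(h)$ for $\lambda\ge1$ from concavity plus $\omega(0)=0$ (which holds by the definition of a modulus of continuity), and making sure the argument also covers the boundary cases $\chi=1$ and $h=0$, both of which are immediate. One could alternatively phrase the whole proof through subadditivity — $\omega(\chi h)\le\lceil\chi\rceil\,\omega(h)$ — but the convex-combination argument gives the sharper constant $\chi+1$ directly and symmetrically, which is what the statement asks for.
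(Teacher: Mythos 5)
Your proof is correct and follows essentially the same route as the paper: both establish the upper bound first and then obtain the lower bound by applying it with $\chi$ replaced by $1/\chi$. The only difference is cosmetic — the paper bounds $\omega(\chi h)\le\omega(\lceil\chi\rceil h)\le\lceil\chi\rceil\omega(h)\le(\chi+1)\omega(h)$ via subadditivity plus monotonicity, whereas you use the concavity scaling $\omega(\lambda h)\le\lambda\,\omega(h)$ for $\lambda\ge1$ directly, giving the marginally sharper intermediate bound $\max\{1,\chi\}\,\omega(h)$; both rest on the same underlying fact that $\omega$ is concave with $\omega(0)=0$.
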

			\begin{proof}
				Since $\omega $ is subadditive, we have for all positive integer $n\ge 1$,  $\omega(n h)\le~n\omega(h)$. For a positive constant $\chi$, by monotonicity of $\omega$, we see that
				$$\omega(\chi h)\le \omega(\lceil \chi \rceil h)\le \lceil \chi \rceil\omega(h)\le (\chi+1)\omega(h),$$
				where $ \lceil \cdot \rceil $ denotes the ceiling function.
				Then, we also obtain
				$$\omega(\chi h)\ge \frac{1}{\frac{1}{\chi}+1}\omega(h)= \frac{\chi}{1 + \chi}\omega(h).$$
			\end{proof}
		\end{subsection}
		\subsection{Local behavior  near a fixed point}\label{S:S3}
		
		Given $\sigma>0$, a measurable function $V: [0,+\infty)\to (0,+\infty)$  is said to be  \emph{regularly varying at $0$ with index} $\sigma$ if condition~\eqref{regularly variation} holds.
		A regularly varying function can be represented in the form $V(x)= x^{\sigma}\mathcal V(x) $, where the function~$\mathcal V $ satisfies
		$ \lim_{x\to 0} \frac{\mathcal V(tx)}{\mathcal V(x)} =1 $, for all $ t > 0 $.
		Similarly a measurable function  $V:[0,+\infty)\to (0,+\infty)$ is \emph{regularly varying at $\infty$ with index}~$\sigma\in \mathbb R$ if the function $x \mapsto V(\frac{1}{x})$ is regularly varying at $0.$
		For properties of regularly varying functions, we refer to~\cite{Sen76} and \cite{Aar97}. See also~\cite{Kar33} for details concerning the original literature.
		
		Recall that near to origin the dynamics is supposed invertible and defined as $T(x)=x(1\pm V(x))$.
		Let  $(w_n)_{n=0}^{+\infty} \subset [0,1]$ be a sequence of points obtained by choosing  $w_0$ close enough to $0$ and by defining  $w_{n+1}= T^{\mp 1}(w_n)$, $ n \ge 0$. In clear terms, for $x\mapsto  x(1+V(x))$ we take pre-images, and for $x\mapsto x(1-V(x))$ we consider future iterates. Note that in both cases  $w_n\to 0$ as $n\to \infty.$ A sequence of iteration times will also play  a central role in our construction. More precisely, let $(n_k)_{k\ge 1}$ be an increasing sequence of positive integers such that for some $\gamma\in (0,1)$,
		\begin{equation}\label{definition of gamma}
		\lim_{k\to \infty} \frac{n_k}{n_{k+1}}= \gamma.
		\end{equation}
		The study of the behavior close to $0$ can be done in a similar way for both  $x\mapsto x(1+V(x))$ and  $x\mapsto x(1-V(x))$. From now on in this subsection, we look  at the case $T(x)= x(1-V(x))$. We will point out in the end similarities and particularities  to the other case

		We write $ \alpha_j \sim \beta_j $ whenever $ \frac{\alpha_j}{\beta_j} \to 1$ as $j\to \infty$. The next lemma summarizes the main properties concerning the asymptotic behavior of the
		sequences~$\left(w_n=T(w_{n-1})\right) $ and $(n_k)$.
		
		\begin{lemma}\label{lema assintoticos} The following properties hold
			\begin{enumerate}
				\item[(i)]
				\begin{equation}\label{asymptotic}
				w_n\sim \displaystyle\frac{1}{\sigma^{1/\sigma}b(n)}, \quad \textrm{ where } b^{-1}(x):=\frac{1}{V(\frac{1}{x})};
				\end{equation}	
				\item[(ii)]
				\begin{equation}\label{sequencias consecutivas}
				d(w_n,w_{n+1})\sim \frac{1}{\sigma^{1+1/\sigma}}\frac{1}{n b(n)};
				\end{equation}
				\item[(iii)]
				\begin{equation}\label{gamma mais 1}
				\frac{n_k}{n_{k+1}} \sim \gamma^{1+ 1/\sigma} \frac{b(n_{k+1})}{b(n_k)} .  	
				\end{equation}
			\end{enumerate}
		\end{lemma}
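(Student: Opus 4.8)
The plan is to deduce all three items from the standard machinery of regularly varying functions — Karamata's theorem, the uniform convergence theorem, and the fact that the functional inverse of a function regularly varying of index $\sigma$ is regularly varying of index $1/\sigma$ (see \cite{Sen76,Aar97}). Throughout, write $V(x) = x^{\sigma}\mathcal V(x)$ with $\mathcal V$ slowly varying at $0$, recall that $V(x)\to 0$ as $x\to 0$, and note that $b$ is regularly varying at $\infty$ of index $1/\sigma$ and satisfies $V(1/b(n)) = 1/n$ by its very definition. We also use that $w_0$ is taken close enough to $0$ so that the whole orbit $(w_n)$ stays in the region where $T(x)=x(1-V(x))$ and the preceding properties apply.

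For (i) the key idea is to linearize the recursion. From $w_{n+1} = w_n(1-V(w_n))$ we get $w_n - w_{n+1} = w_n V(w_n)$ and $w_{n+1}/w_n = 1 - V(w_n)\to 1$. Introduce $\phi(x):=\int_{x}^{w_0}\frac{dt}{tV(t)}$; the integrand $t\mapsto \frac{1}{tV(t)} = t^{-1-\sigma}/\mathcal V(t)$ is regularly varying at $0$ of index $-1-\sigma<-1$, so Karamata's theorem gives $\phi(x)\sim\frac{1}{\sigma V(x)}$ as $x\to 0$; in particular $\phi$ is regularly varying at $0$ of index $-\sigma$. Since $\phi(w_{n+1})-\phi(w_n) = \int_{w_{n+1}}^{w_n}\frac{dt}{tV(t)}$ and $w_{n+1}/w_n\to 1$, the uniform convergence theorem makes the integrand asymptotically equivalent to $\frac{1}{w_n V(w_n)}$ uniformly on $[w_{n+1},w_n]$, whence $\phi(w_{n+1})-\phi(w_n)\sim\frac{w_n-w_{n+1}}{w_nV(w_n)} = 1$; by Ces\`aro summation $\phi(w_n)\sim n$, and hence $V(w_n)\sim\frac{1}{\sigma n}$. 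Finally I would invert the asymptotics for $\phi$: since $\phi$ is strictly decreasing and regularly varying of index $-\sigma$, while $\phi(1/b(n))\sim\frac{n}{\sigma}$, the chain $\phi(w_n)\sim n\sim\sigma\,\phi(1/b(n))\sim\phi(\sigma^{-1/\sigma}/b(n))$ forces $w_n\sim\sigma^{-1/\sigma}/b(n)$, which is \eqref{asymptotic}.

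Statement (ii) is then immediate, since $d(w_n,w_{n+1}) = w_nV(w_n)$ and multiplying $w_n\sim\sigma^{-1/\sigma}/b(n)$ by $V(w_n)\sim\frac{1}{\sigma n}$ yields \eqref{sequencias consecutivas}. For (iii), because $b$ is regularly varying at $\infty$ of index $1/\sigma$ and $n_k/n_{k+1}\to\gamma\in(0,1)$, the uniform convergence theorem gives $\frac{b(n_k)}{b(n_{k+1})} = \frac{b((n_k/n_{k+1})n_{k+1})}{b(n_{k+1})}\to\gamma^{1/\sigma}$; therefore $\gamma^{-1-1/\sigma}\cdot\frac{n_k}{n_{k+1}}\cdot\frac{b(n_k)}{b(n_{k+1})}\to\gamma^{-1-1/\sigma}\cdot\gamma\cdot\gamma^{1/\sigma} = 1$, which is exactly \eqref{gamma mais 1}. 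The case $T(x)=x(1+V(x))$, where $w_{n+1}=T^{-1}(w_n)$, is treated in the same way: now $w_n = w_{n+1}(1+V(w_{n+1}))$, so $w_n - w_{n+1} = w_{n+1}V(w_{n+1})$ and $w_{n+1}/w_n\to 1$ still hold, and the argument above goes through verbatim with $w_{n+1}V(w_{n+1})$ in place of $w_nV(w_n)$, producing the identical asymptotics \eqref{asymptotic}--\eqref{gamma mais 1}.

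The step I expect to be the real obstacle is the bootstrapping in (i): passing from the multiplicative recursion to the additive one through $\phi$, and then inverting $\phi(w_n)\sim n$ back into an asymptotic for $w_n$ written in terms of $b$. The delicate point is that $\mathcal V$ may be unbounded or slowly oscillating near $0$, so it can never be controlled pointwise — it must enter only through the integral (Karamata) and monotonicity (uniform convergence) mechanisms — and one must keep the successive regular-variation manipulations and their indices perfectly consistent.
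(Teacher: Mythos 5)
Your proposal is correct, and for part (i) it takes a genuinely different (more self-contained) route than the paper. The paper obtains \eqref{asymptotic} by quoting \cite[Lemma~4.8.6]{Aar97}, which delivers the key intermediate asymptotics $b^{-1}(1/w_n)\sim n\sigma$ (equivalently $V(w_n)\sim\frac{1}{\sigma n}$) essentially as a black box; you instead reprove that statement from scratch via the Abel-type linearization $\phi(x)=\int_x^{w_0}\frac{dt}{tV(t)}$, Karamata's theorem ($\phi\sim\frac{1}{\sigma V}$), the telescoping/Ces\`aro step $\phi(w_{n+1})-\phi(w_n)\to 1$, and an inversion through the regular variation of $\phi^{-1}$. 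The two derivations are consistent, since $\phi(w_n)\sim\frac{1}{\sigma V(w_n)}=\frac{1}{\sigma}\,b^{-1}(1/w_n)$, and your version has the advantage of exposing exactly where the hypotheses enter (the recursion $w_n-w_{n+1}=w_nV(w_n)$ and the index $\sigma>0$), at the cost of having to justify the final inversion $\phi(a_n)\sim\phi(c_n)\Rightarrow a_n\sim c_n$ — which you correctly flag and which follows from the uniform convergence theorem applied to the inverse of a monotone regularly varying function of nonzero index; note the paper faces the same inversion issue when passing from $b^{-1}(1/w_n)\sim n\sigma$ to \eqref{asymptotic}. Part (ii) is handled identically in both arguments. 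For part (iii), the paper works through the Karamata representation of the slowly varying factor $\mathcal B$ of $b$ to show $\mathcal B(n_k)/\mathcal B(n_{k+1})\to 1$, whereas you invoke the uniform convergence theorem directly to get $b(n_k)/b(n_{k+1})\to\gamma^{1/\sigma}$; your version is shorter and equally rigorous, while the paper's makes the mechanism behind the uniform convergence explicit. Your treatment of the case $T(x)=x(1+V(x))$ is also sound, since $w_{n+1}/w_n\to 1$ still forces $w_{n+1}V(w_{n+1})\sim w_nV(w_n)$.
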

		\begin{proof}
			To verify Part~(iii), we first note that $ \frac{b^{-1}(tx)}{b^{-1}(x)} = \frac{V(1/x)}{V(1/tx)} \to \frac{1}{(1/t)^\sigma} = t^\sigma $ as $ x \to \infty $, which means that $ b^{-1} $ is regularly varying at $\infty$ with  index $\sigma$.
			Hence, its inverse, the increasing function $ b $, is regularly varying at  $\infty$ with index $1/\sigma$ (for details, see \cite{Sen76}).
			
			We set $b(y)= y^{1/\sigma} \mathcal B(y)$, where $\lim_{y\to \infty}\frac{\mathcal B(ty)}{\mathcal B(y)}=1$, for every $t>0.$ The function $\mathcal B$ has the following representation
			(for a proof, see \cite[Theorem~$1.2$]{Sen76}): there exist  $Y>0$ and measurable functions $\Theta: [Y,\infty)\to \mathbb R$, $\varepsilon: [Y,\infty)\to \mathbb (-\frac{\sigma}{2}, \frac{\sigma}{2}) $,  with
			$\Theta(y)\to \theta\in \mathbb R^+$ as $y\to \infty$ and $\varepsilon(t)\to 0$ as $t\to \infty$, such that
			\begin{equation*}
			\mathcal B(y)= \Theta(y)e^{\int_{Y}^{y} \frac{\varepsilon(t)}{t}dt}  \qquad  \forall \, y \ge Y.
			\end{equation*}
			Then
			\begin{equation*}
			\log \frac{\mathcal B(n_k)}{\mathcal B(n_{k+1})} =
			\log\frac{\Theta(n_k)}{\Theta(n_{k+1})}+\int_{n_{k+1}}^{n_k} \frac{\varepsilon(t)}{t}dt \qquad \text{and}
			\end{equation*}
			\begin{equation*}
			\big(\sup_{[n_k,+\infty)}\varepsilon\big) \log \frac{n_k}{n_{k+1}}
			\le \int_{n_{k+1}}^{n_k} \frac{\varepsilon(t)}{t}dt
			\le \big(\inf_{[n_k,+\infty)} \varepsilon\big) \log \frac{n_k}{n_{k+1}}
			\end{equation*}
			ensure that  $\frac{\mathcal B(n_k)}{\mathcal B(n_{k+1})} \to 1$ as $k\to +\infty.$
			Therefore 	
			$$\frac{n_k b(n_k)}{n_{k+1} b(n_{k+1})}=\Big(\frac{n_k}{n_{k+1}} \Big)^{1+ 1/\sigma}\frac{\mathcal B(n_k)}{\mathcal B(n_{k+1})}\to \gamma^{1+ 1/\sigma} \textrm{ as } k\to\infty.$$
			Part~(i) follows from~\cite[Lemma~4.8.6]{Aar97} which  is deduced using that
			\begin{equation}\label{asymptotic a}
			b^{-1}\Big(\frac{1}{w_{n}}\Big)\sim n\sigma.
			\end{equation}
			The asymptotic equivalence~\eqref{asymptotic a} implies that
			$ V(w_{n})=1 / b^{-1}\big(\frac{1}{w_{n}}\big) \sim \frac{1}{n\sigma}$, so it follows that $d(w_n,w_{n+1}) = w_n V(w_n) \sim \frac{1}{\sigma^{1+1/\sigma}}\frac{1}{n b(n)} $ and therefore Part~(ii) holds.
		\end{proof}
		
		\begin{remark}\label{Remark consecutive sequences}
			Since $ b $ is  a continuous and increasing function and since we consider the standard metric on $\mathbb R$,  by the asymptotic equivalence~(\ref{sequencias consecutivas}),
			there exists a constant $C_0>1$ such that  for every $i \le j$,
			\begin{equation}\label{minoracao e majoracao}
			(j-i) C_0^{-1}\frac{1}{\sigma^{1+1/\sigma}}\frac{1}{j\, b(j)} \le d(w_i, w_j)\le (j-i) C_0\frac{1}{\sigma^{1+1/\sigma}} \frac{1}{i\, b(i)}.
			\end{equation}
		\end{remark}

		The next lemma provides us estimates on the cardinality of future iterates that stay within suitable intervals.
		
		\begin{lemma}\label{minoracao cardinalidade}
			Let us consider $(w_{n_k})_{k=1}^{+\infty}$ a subsequence  of $(w_n)_{n=0}^{+\infty}$, where
			$(n_k)_{k\ge 1}$ is an increasing sequence  satisfying~(\ref{definition of gamma}) and $T^{n_{k}-n_{k-1}}(w_{n_{k-1}})= w_{n_k}$. 	For $k\ge 1$, denote
			\begin{equation*}
			R_k:= \frac{1}{3C_0^3}\frac{n_{k-1} b(n_{k-1})}{n_{k}b(n_{k})} d(w_{n_k}, w_{n_{k-1}}).
			\end{equation*}
			Then, for $z\in [w_{n_{k}} + R_k, w_{n_{k-1}}]$ and $ k $ large enough,
			\begin{multline*}
			\# \big\{0\le j< n_k-n_{k-1} : R_{k} \le d(T^j(z),  w_{n_{k}}) \le \frac{1}{3} d(w_{n_{k}},  w_{n_{k-1}}) \big\} \ge \\
			\ge C_1 n_{k-1} b(n_{k-1}) d(w_{n_{k}},w_{n_{k-1}}),
			\end{multline*}
			where $ C_1 : =  \frac{1}{4} (C_0^{-1}-C_{0}^{-2})\sigma^{1+1/\sigma} >0 $. In particular,  there is $C_2>0$ such that,  for $k$ sufficiently large, 		  	
			\begin{equation*}
			\#\big\{0\le j< n_k-n_{k-1}: R_k \le d(T^{j}(w_{n_{k-1}}),  w_{n_k}) \le\frac{1}{3} d(w_{n_k},w_{n_{k-1}}) \big\}
			\ge \frac{C_2}{V(w_{n_k})}.
			\end{equation*}
		\end{lemma}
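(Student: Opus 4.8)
The plan is to exploit that, for $x$ near $0$, $T$ is an increasing homeomorphism with $T(x)<x$, so forward orbits of points close to $0$ decrease strictly to $0$, and by monotonicity $z\le w_{n_{k-1}}$ forces $T^j(z)\le w_{n_{k-1}+j}$ for every $j\ge 0$. Writing $d_k:=d(w_{n_k},w_{n_{k-1}})$, the inequality $R_k\le d(T^j(z),w_{n_k})\le\tfrac13 d_k$ says exactly that $T^j(z)$ belongs to the annulus $\mathcal A_k:=\big([w_{n_k}-\tfrac13 d_k,\,w_{n_k}-R_k]\cup[w_{n_k}+R_k,\,w_{n_k}+\tfrac13 d_k]\big)\cap(0,\infty)$, so the quantity to bound from below is the number $N_k(z)$ of $j\in\{0,\dots,n_k-n_{k-1}-1\}$ with $T^j(z)\in\mathcal A_k$. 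I would estimate $N_k(z)$ as (number of visits of the orbit to the ball $B_k:=[w_{n_k}-\tfrac13 d_k,\,w_{n_k}+\tfrac13 d_k]\cap(0,\infty)$) minus (number of visits to the gap $G_k:=(w_{n_k}-R_k,\,w_{n_k}+R_k)$); each is a single block of consecutive times because the orbit is monotone. The elementary counting tool is: a strictly decreasing orbit arc whose one-step jumps are all $\le\Delta$ meets an interval $[a,b]$ that it actually crosses in more than $(b-a)/\Delta-2$ of its iterates, and it meets an interval it visits in fewer than $(b-a)/\delta+2$ iterates if the jumps inside it are all $\ge\delta$. By Remark~\ref{Remark consecutive sequences}, along the whole orbit the index stays $\ge n_{k-1}$, so every jump is $\le\Delta_k:=C_0\sigma^{-(1+1/\sigma)}\big(n_{k-1}b(n_{k-1})\big)^{-1}$; it is the pairing of this $\Delta_k$ with a crossed interval of length of order $d_k$ that produces $C_1$.

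For the ball count, note that $z\le w_{n_{k-1}}$ gives $T^{n_k-n_{k-1}}(z)\le w_{n_k}$, and that by Lemma~\ref{lema assintoticos}(ii) the jump near $w_{n_k}$ is $o(R_k)$, so the orbit already lies below $w_{n_k}+R_k$ at time $n_k-n_{k-1}-1$; on the other hand it enters $B_k$ by time at most $m_0-n_{k-1}+1$, where $m_0$ is the largest index with $w_{m_0}\ge w_{n_k}+\tfrac13 d_k$. Since $d(w_{m_0},w_{n_k})\ge\tfrac13 d_k$, Remark~\ref{Remark consecutive sequences} together with $m_0\ge n_{k-1}$ (so $m_0 b(m_0)\ge n_{k-1}b(n_{k-1})$) yields $n_k-m_0\ge\tfrac{\sigma^{1+1/\sigma}}{3C_0}\,n_{k-1}b(n_{k-1})d_k$. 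Analysing whether or not the orbit leaves $B_k$ below $w_{n_k}-\tfrac13 d_k$ before the window closes — in the first case using that from entering $B_k$ (where the orbit is $\ge w_{n_k}+R_k$) to the step before leaving it the orbit travels more than $\tfrac13 d_k-\Delta_k$, and $\Delta_k=o(d_k)$ by Lemma~\ref{lema assintoticos} — one gets, uniformly in $z\in[w_{n_k}+R_k,w_{n_{k-1}}]$, that the number of visits to $B_k$ is at least $\tfrac{\sigma^{1+1/\sigma}}{3C_0}\,n_{k-1}b(n_{k-1})d_k-O(1)$. For the gap count, the orbit crosses $G_k$ over a total travel below $2R_k$, and the jumps there are $\ge C_0^{-1}\sigma^{-(1+1/\sigma)}c_\sigma^{-1}\big(n_k b(n_k)\big)^{-1}$ for some $c_\sigma$ depending only on $\sigma$ (by Remark~\ref{Remark consecutive sequences}, since $R_k$ is a bounded fraction of $w_{n_k}$ and $b$ is regularly varying, all indices involved are comparable to $n_k$); together with $R_k=\tfrac1{3C_0^3}\tfrac{n_{k-1}b(n_{k-1})}{n_k b(n_k)}d_k$ this bounds the number of gap visits by $\tfrac{C'\sigma^{1+1/\sigma}}{C_0^2}\,n_{k-1}b(n_{k-1})d_k+O(1)$, with $C'$ depending only on $\sigma$.

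Combining, $N_k(z)\ge\big(\tfrac1{3C_0}-\tfrac{C'}{C_0^2}\big)\sigma^{1+1/\sigma}n_{k-1}b(n_{k-1})d_k-O(1)$ for every admissible $z$. Since $C_0$ is merely a constant for which Remark~\ref{Remark consecutive sequences} holds, we may (replacing it by a larger one if necessary, which leaves that Remark — hence $\Delta_k$ and the bounds above — valid) assume $C_0\ge 12C'$; then the coefficient is at least $\tfrac1{4C_0}\sigma^{1+1/\sigma}\ge\tfrac14(C_0^{-1}-C_0^{-2})\sigma^{1+1/\sigma}=C_1$, and as $n_{k-1}b(n_{k-1})d_k\to\infty$ while this coefficient strictly exceeds $C_1$, the $O(1)$ is absorbed for $k$ large, giving $N_k(z)\ge C_1\,n_{k-1}b(n_{k-1})d_k$. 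In the particular case $z=w_{n_{k-1}}$ the gap subtraction is unnecessary: the orbit $w_{n_{k-1}},\dots,w_{n_k-1}$ stays above $w_{n_k}$, so $N_k(w_{n_{k-1}})$ is the number of $m\in[n_{k-1},n_k)$ with $w_m\in[w_{n_k}+R_k,\,w_{n_k}+\tfrac13 d_k]$, which by Remark~\ref{Remark consecutive sequences} and $R_k<\tfrac1{3C_0^3}d_k$ is at least $\tfrac13(C_0^{-1}-C_0^{-4})\sigma^{1+1/\sigma}n_{k-1}b(n_{k-1})d_k\ge C_1 n_{k-1}b(n_{k-1})d_k$, using $\tfrac13(1-C_0^{-3})\ge\tfrac14(1-C_0^{-1})$. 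The ``in particular'' statement then follows by taking $z=w_{n_{k-1}}$ and using that $1/V(w_{n_k})\sim\sigma n_k$ (from the proof of Lemma~\ref{lema assintoticos}) while $n_{k-1}b(n_{k-1})d_k$ is comparable to $n_k$ by Lemma~\ref{lema assintoticos}(ii)--(iii); hence $C_1\,n_{k-1}b(n_{k-1})d_k\ge C_2/V(w_{n_k})$ for a suitable $C_2>0$ and all large $k$.

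The main obstacle is the uniformity in $z$ in the ball-count estimate. For $z$ near the lower endpoint $w_{n_k}+R_k$ the orbit leaves the outer component $[w_{n_k}+R_k,\,w_{n_k}+\tfrac13 d_k]$ almost at once, so the count must come from the iterates that, after the orbit clears the thin gap $G_k$, fall into $[(w_{n_k}-\tfrac13 d_k)\vee 0,\ w_{n_k}-R_k]$; controlling how deep into that interval the orbit descends within exactly $n_k-n_{k-1}$ steps requires translating between index differences and metric distances through Remark~\ref{Remark consecutive sequences}, together with Lemma~\ref{lema assintoticos}(iii) (and \eqref{definition of gamma}) to keep $n_k b(n_k)/(n_{k-1}b(n_{k-1}))$ bounded. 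Making all of this compatible with the precise normalizations in the statement — the $\tfrac13$ in the distance threshold, the $\tfrac1{3C_0^3}$ in $R_k$, and the value of $C_1$ — is the technical heart of the argument.
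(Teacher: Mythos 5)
Your route is genuinely different from the paper's. The paper never looks at the full annulus: it sandwiches $T^j(z)$ between $w_{n_{k-1}+\ell+j}$ and $w_{n_{k-1}+\ell-1+j}$ and counts only the iterates forced into the \emph{outer} component $[w_{n_k}+R_k,\,w_{n_k}+\tfrac13 d_k]$, reading off from \eqref{minoracao e majoracao} an explicit window of admissible $j$'s of length $\tfrac13(C_0^{-1}-C_0^{-2})\sigma^{1+1/\sigma}n_{k-1}b(n_{k-1})d_k$, so no gap is ever subtracted. Your ball-minus-gap scheme, and your attention to $z$ near $w_{n_k}+R_k$ where the count must come from the component below $w_{n_k}$, address a real delicacy; but the execution has a quantitative gap that is not closed.

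The problem is the step where you subtract the worst-case gap count from the worst-case ball count and then ``enlarge $C_0$''. Your gap bound is $2R_k/\delta_k+O(1)$ with $\delta_k\le d(w_{n_k},w_{n_k+1})\sim \sigma^{-(1+1/\sigma)}(n_kb(n_k))^{-1}$, hence at best (taking $c_\sigma=1$) the gap can swallow $\tfrac{2}{3C_0^{2}}\,\sigma^{1+1/\sigma}n_{k-1}b(n_{k-1})d_k$ iterates, already more than your ball lower bound $\tfrac{1}{3C_0}\,\sigma^{1+1/\sigma}n_{k-1}b(n_{k-1})d_k$ whenever $C_0^2<2$; with the honest $c_\sigma>1$ (the steps in the lower half of the gap are strictly smaller than at $w_{n_k}$, by a factor depending on $\sigma$ and $\gamma$) the threshold is worse. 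So your difference is negative exactly in the regime $C_0$ close to $1$. The fix you propose --- replacing $C_0$ by a larger constant --- is not available: $C_0$ enters the \emph{statement} through both $R_k$ and $C_1$, so enlarging it shrinks $R_k$ and alters $C_1$, and you end up proving a different lemma; moreover the proof of Theorem~\ref{t:Theorem 1} applies this lemma under the constraint \eqref{limitando C_0}, $C_0^2\le\tfrac76\gamma^{1+1/\sigma}<\tfrac76$, which your argument cannot accommodate. The underlying issue is that the two worst cases you subtract are never attained by the same $z$: an orbit that traverses the whole gap necessarily continues into the left component, where the steps are even smaller and the visit count correspondingly larger, while an orbit whose ball count is only the minimal $\tfrac{1}{3C_0}\sigma^{1+1/\sigma}n_{k-1}b(n_{k-1})d_k$ (e.g.\ $z=w_{n_{k-1}}$) meets at most the upper half of the gap. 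A correct version of your argument must split according to how far the orbit descends within the window and pair each gap crossing with the left-component visits it produces, rather than subtracting uniform extremes; as written, the central inequality $N_k(z)\ge\big(\tfrac{1}{3C_0}-\tfrac{C'}{C_0^2}\big)\sigma^{1+1/\sigma}n_{k-1}b(n_{k-1})d_k-O(1)$ is vacuous for the $C_0$ the lemma is about.
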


		\begin{proof}
			Let $ \ell \ge 1 $ be such that $ w_{n_{k-1} + \ell} < z \le w_{n_{k-1} + (\ell-1)} $.
			Note that a nonnegative integer $ j $  such that
			\begin{equation}\label{primeira condicao suficiente minoracao}
			R_{k} \le d(w_{n_{k-1} + \ell + j}, w_{n_k}) \quad \text{and} \quad
			d(w_{n_{k-1} + (\ell - 1) + j}, w_{n_k}) \le \frac{1}{3} d(w_{n_{k}}, w_{n_{k-1}})
			\end{equation}
			belongs to $ \big\{j : R_{k} \le d(T^j(z), w_{n_k}) \le \frac{1}{3} d(w_{n_{k}}, w_{n_{k-1}}) \big\} $. Moreover, thanks to~\eqref{minoracao e majoracao}, any $ j \ge 0 $ such that
			\begin{multline}\label{segunda condicao suficiente minoracao}
			R_k \le (n_{k} - n_{k-1} - \ell - j) C_0^{-1} \frac{1}{\sigma^{1+1/\sigma}}\frac{1}{n_{k}b(n_{k})} \quad \text{and} \\ (n_{k} - n_{k-1} - (\ell - 1) - j) C_0 \frac{1}{\sigma^{1+1/\sigma}}\frac{1}{n_{k-1} b(n_{k-1})}\le
			\frac{1}{3} d(w_{n_k}, w_{n_{k-1}})
			\end{multline}
			satisfies~\eqref{primeira condicao suficiente minoracao}. Denoting $ \kappa := n_{k} - n_{k-1} - \ell $, there are exactly
			$$ \lfloor \kappa - C_0\sigma^{1+1/\sigma} n_{k}b(n_{k}) R_{k} \rfloor - \lceil \kappa + 1 - \frac{1}{3} C_0^{-1} \sigma^{1+1/\sigma} n_{k-1}b(n_{k-1})  d(w_{n_{k}}, w_{n_{k-1}}) \rceil + 1 $$
			nonnegative integers $j$ that fulfill~\eqref{segunda condicao suficiente minoracao}. Therefore, we have
			\begin{align*}
			\# \big\{j : R_{k} \le & \,\, d(T^j(z),  w_{n_k}) \le \frac{1}{3} d(w_{n_{k}}, w_{n_{k-1}}) \big\} \ge \\
			& \ge  \frac{1}{3} C_0^{-1} \sigma^{1+1/\sigma} n_{k-1}b(n_{k-1})  d(w_{n_{k}}, w_{n_{k-1}}) -  C_0 \sigma^{1+1/\sigma} n_{k}b(n_{k}) R_k -2 \\
			& = \frac{1}{3} (C_0^{-1} - C_0^{-2}) \sigma^{1+1/\sigma} n_{k-1} b(n_{k-1})  d(w_{n_{k}}, w_{n_{k-1}}) - 2.
			\end{align*}
			Note that, from Remark~\ref{Remark consecutive sequences} and Lemma~\ref{lema assintoticos}, as $ k \to \infty $
			\begin{equation*}
			\sigma^{1+1/\sigma} n_{k-1} b(n_{k-1}) d(w_{n_{k}}, w_{n_{k-1}}) \ge C_0^{-1} n_{k} \Big( 1 - \frac{n_{k-1}}{n_{k}} \Big)
			\frac{n_{k-1} b(n_{k-1})}{n_{k} b(n_{k})} \to \infty.
			\end{equation*}
			Hence, ignoring  at most finitely many initial terms of $ (n_k) $ if necessary, we obtain
			$$
			\# \big\{j : R_{k} \le d(T^j(z),  w_{n_{k}}) \le \frac{1}{3} d(w_{n_{k}}, w_{n_{k-1}}) \big\}
			\ge C_1 n_{k-1} b(n_{k-1}) d(w_{n_{k}},w_{n_{k-1}}).
			$$
			
			In particular, for $z = w_{n_{k-1}}$, from~\eqref{minoracao e majoracao} we have
			\begin{align*}
			d(w_{n_k},w_{n_{k-1}})^\sigma  \# \big\{ j: & \,\, R_k \le d(T^j(w_{n_{k-1}}), w_{n_{k}}) \le \frac{1}{3} d(w_{n_{k}},w_{n_{k-1}})\big\} \ge \\
			&\ge
			C_1 d(w_{n_{k}},w_{n_{k-1}})^{\sigma+1} n_{k-1} b(n_{k-1})\\
			&\ge
			C_1 \Big[(n_{k}-n_{k-1}) C_0^{-1}\frac{1}{\sigma^{1+1/\sigma}} \frac{1}{n_{k}b(n_{k})}\Big]^{\sigma+1}  n_{k-1} b(n_{k-1})\\
			&=
			\frac{C_1}{C_0^{\sigma + 1} \sigma^{(\sigma+1)^2/\sigma}}\Big(1-\frac{n_{k-1}}{n_{k}}\Big)^{\sigma+1}  \frac{n_{k-1}b(n_{k-1})}{n_{k}b(n_{k})}\frac{n_{k}}{b(n_{k})^\sigma}.
			\end{align*}
			Note now that, from~\eqref{asymptotic} and \eqref{asymptotic a},
			\begin{equation*}
			\frac{n}{b(n)^\sigma}\sim \sigma nw_n^\sigma\sim \frac{w_n^\sigma}{V(w_{n})}.
			\end{equation*}
			Denote thus $ C_1' :=  \frac{1}{2} \frac{C_1}{C_0^{\sigma + 1} \sigma^{(\sigma+1)^2/\sigma}} (1-\gamma)^{\sigma+1} \gamma^{1+1/\sigma} > 0 $.
			Following the previous estimate and the above asymptotic equivalence, from~\eqref{definition of gamma} and \eqref{gamma mais 1}, for $ k $ large enough,
			$$\#
			\big\{ j: R_k \le d(T^j(w_{n_{k-1}}), w_{n_k}) \le \frac{1}{3} d(w_{n_{k-1}},w_{n_k})
			\big\}\ge \frac{C_1'}{V(w_{n_k})} \frac{w_{n_k}^\sigma}{d(w_{n_k},w_{n_{k-1}})^\sigma}.
			$$
			Note now that,  from Remark~\ref{Remark consecutive sequences} and Lemma~\ref{lema assintoticos}, for $ k $ sufficiently large,
			\begin{equation*}
			d(w_{n_k},w_{n_{k-1}}) \le \left(1-\frac{n_{k-1}}{n_{k}}\right) C_0 \frac{1}{\sigma}  \frac{n_{k} b(n_k)}{n_{k-1} b(n_{k-1})} \frac{1}{\sigma^{1/\sigma} b(n_k)}  \le 2(1-\gamma) C_0\frac{1}{\sigma}\frac{1}{\gamma^{1+1/\sigma}}  w_{n_k}.
			\end{equation*}
			We obtain thus a constant $ C_1'' > 0 $ such that $  \frac{w_{n_k}^\sigma}{d(w_{n_k},w_{n_{k-1}})^\sigma} \ge C_1'' $
			whenever $ k $ is large enough, which completes the proof with
			$ C_2 := C_1' C_1'' $.
		\end{proof}
		
		\paragraph{Comments on local behavior near to origin for $x\mapsto x(1+V(x))$.}
		In this case, we deal with a sequence of past iterates $\left(w_n= T(w_{n+1})\right)$, where $T(x)=x(1+V(x))$ in a neighborhood of $0$. It is not a surprise that asymptotic equivalences are exactly the same as in the statement of Lemma~\ref{lema assintoticos}. One may show easily such a fact with minor adjustments in the proof and an appropriate version of \cite[Lemma 4.8.6]{Aar97}, which can be obtained repeating almost verbatim original arguments. The statement of Lemma~\ref{minoracao cardinalidade} for this case obviously requires contextual changes since the sequences are now related  by $T^{n_{k}-n_{k-1}}(w_{n_{k}})= w_{n_{k-1}}$. If one follows the same lines of proof, one will conclude that  for $z\in [w_{n_k}, w_{n_{k-1}}-R_k]$ and $k$ large enough,
		\begin{multline*}
		\# \big\{0\le j< n_k-n_{k-1} : R_{k} \le d(T^j(z),  w_{n_{k-1}}) \le \frac{1}{3} d(w_{n_{k}},  w_{n_{k-1}}) \big\} \ge \\
		\ge C_1 n_{k-1} b(n_{k-1}) d(w_{n_{k}},w_{n_{k-1}}),
		\end{multline*}
		and in particular for  $k$ sufficiently large, 		  	
		\begin{equation}\label{segundo minorante alternativo}
		\#\big\{0\le j< n_k-n_{k-1}: R_k \le d(T^{j}(w_{n_{k}}),  w_{n_{k-1}}) \le\frac{1}{3} d(w_{n_k},w_{n_{k-1}}) \big\}
		\ge \frac{C_2}{V(w_{n_k})}.
		\end{equation}

		\section{Proof of Theorem~\ref{t:Theorem 1}}\label{Section proof of Theorem}
		We will present in details the proof of Theorem~\ref{t:Theorem 1} when $T(x)=x(1-V(x))$ for $x$ close to $0$. In the end, we will comment on the small changes of arguments required to prove the theorem in the case $x\mapsto x(1+V(x))$. Hence, let~$(w_{n_k})_{k=1}^{+\infty}$ be a subsequence  of future iterates $(w_n = T^n(w_0))_{n=0}^{+\infty}$,
		where $ w_0 \in (0,1) $ is a point close enough to $ 0 $ and $(n_k)_{k\ge 1}$ is an increasing sequence such that
		$  \lim_{k\to +\infty}\frac{n_k}{n_{k+1}}=\gamma $ for some $ \gamma \in (0, 1) $.
		
		Define then
		\begin{equation*}
		S:=\{w_{n_k}\}_{k=1}^{+\infty}\cup \{0\}.
		\end{equation*}
		For every $k > 1$,  set
		\begin{align*}
		I_k & = \Big (\frac{1}{5} (3 w_{n_k} + 2 w_{n_{k+1}}), \frac{1}{5} (3 w_{n_k} + 2 w_{n_{k-1}}) \Big) \qquad \text{and} \\
		J_k & = \Big (\frac{1}{3} (w_{n_k} + 2 w_{n_{k+1}}), \frac{1}{3} (2 w_{n_k} + w_{n_{k+1}}) \Big),
		\end{align*}
		and denote $ Y:= (w_{n_1}, 1] \cup \bigcup_k J_k $. Since $ \{Y, \, I_k \,\, (k > 1)\} $ is an open cover  of $ ((0,1], \omega \circ d) $, we may consider a partition of unity subordinate to it (see Figure~\ref{Figure1}).
		
		Precisely, let  $\{ \varphi_Y, \, \varphi_k: ((0,1], \omega \circ d) \to [0,1] \,\, (k > 1)\} $ be a family of Lipschitz continuous functions such that  $ \varphi_Y + \sum_k \varphi_k=1$,
		with $\Supp(\varphi_Y)\subset Y$ and $\Supp(\varphi_k)\subset I_k $. In particular,  $ \omega $ is a modulus of continuity of $ \varphi_Y $ and of $ \varphi_k $ $ (k > 1) $.
		
		\bigskip
		
		\begin{figure}[h]
			\setlength{\abovecaptionskip}{30pt plus 3pt minus 2pt}
			\centering
			\includegraphics[height=1in,width=5in]{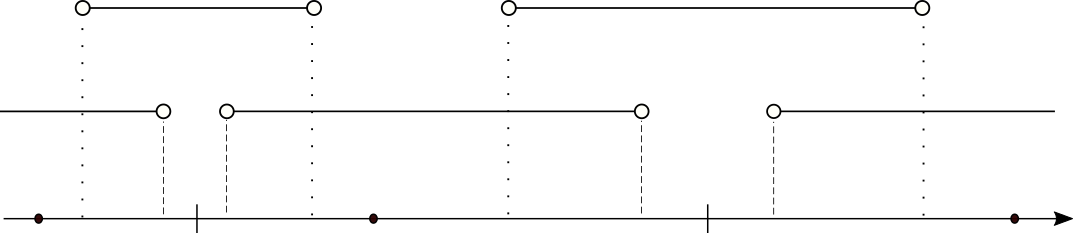}
			\caption{$d^{-}:=d(w_{n_k},w_{n_{k-1}})$, $d^{+}:=d(w_{n_k},w_{n_{k+1}})$}
			\label{Figure1}
			\setlength{\unitlength}{1cm}
			\begin{picture}(0,0)
			\thicklines
			\put(1.6,4.7){$J_{k-1}$}
			\put(-4.3,4.7){$J_{k}$}
			\put(-6.1,3.5){$I_{k+1}$}
			\put(-1.4,3.5){$I_{k}$}
			\put(4.6,3.5){$I_{k-1}$}
			\put(-2.1,1.8){$w_{n_k}$}
			\put(-6.2,1.8){$w_{n_{k+1}}$}
			\put(5.4,1.8){$w_{n_{k-1}}$}
			\put(-3.2,2.3){$2/5d^+$}
			\put(-0.6,2.3){$2/5d^-$}
			\put(-4.8,1.5){$\frac{w_{n_k}+w_{n_{k+1}}}{2}$}
			\put(1.2,1.5){$\frac{w_{n_{k-1}}+w_{n_k}}{2}$}
			\end{picture}
		\end{figure}
		For $\xi>0$, define $$\Phi(x):= \left \{
		\begin{array}{cc}
		\varphi_{k}(x),& x\in I_k, \,\, k=1 \mod 3 \\
		-\xi \varphi_{k}(x),& x\in I_k, \,\, k=2 \mod 3 \\
		0,&  \textrm{otherwise},
		\end{array}
		\right.$$ and consider $f: [0,1]\to \mathbb R$ given as
		\begin{equation}\label{definicao funcao}
		f(x):= \Phi(x)\, \omega(d(x,S)).
		\end{equation}
		This function clearly vanishes on $S$. Moreover,  $ f $ has $\omega$ as modulus of continuity.
		We will show that, for $ \xi $ large enough, $ f $ does not admit a continuous sub-action.
		
		We have $T^{m_k}(w_{n_{k-1}})= w_{n_{k}}$, where $m_k:= n_{k}-n_{k-1}$, and
		$$ S_{m_k} f(w_{n_{k-1}}) = \sum_{j= 0}^{m_k-1} f\left(T^j\left(w_{n_{k-1}}\right)\right)
		= \sum_{j=0}^{m_k-1} \Phi(w_{n_{k-1}+j})\, \omega(d(w_{n_{k-1}+j}, S)). $$
		Recall the definition of $ R_k $ in the statement of Lemma~\ref{minoracao cardinalidade}. Note that, for $ k $ large enough,
		$\left[w_{n_k}, w_{n_k}+R_k\right)\subset \left[w_{n_k}, \frac{1}{3}(2w_{n_k}+w_{n_{k-1}})\right)\subset I_k$. Besides, by construction $ \varphi_k \equiv 1 $ on
		$ \big[ \frac{1}{3}(2 w_{n_k} + w_{n_{k+1}}), \frac{1}{3} (2 w_{n_k} +  w_{n_{k-1}}) \big] $.
		Therefore, if $ k=1 \mod 3 $ is sufficiently large, from Lemma~\ref{minoracao cardinalidade} we get
		\begin{eqnarray*}
			S_{m_k} f(w_{n_{k-1}})
			&\ge & \#\big\{j:
			R_k \le d(w_{n_{k-1}+j}, w_{n_k}) \le \frac{1}{3} d(w_{n_k},w_{n_{k-1}})
			\big\}
			\omega(R_k)\\
			&\ge& \frac{C_2}{V(w_{n_k})} \omega(R_k).
		\end{eqnarray*}
		We will show that for $k$ sufficiently large,  $\frac{\omega(R_k)}{V(w_{n_k})}$ is  bounded from below by a positive constant. As a matter of fact, by the definition of $R_k$ and \eqref{gamma mais 1},
		$$  \lim_{k\to \infty}\frac{R_k}{d(w_{n_k},w_{n_{k-1}})}= \frac{1}{3} \frac{\gamma^{1+1/\sigma}}{C_0^3}. $$
		For $ C_3:=\frac{1}{4} \frac{\gamma^{1+1/\sigma}}{C_0^3} > 0 $, using the monotonicity of $\omega$ and Lemma~\ref{modulo constante}, we have that for a sufficiently large $k$,
		$$
		\omega(R_k)\ge \frac{C_3}{1+ C_3} \omega(d(w_{n_k},w_{n_{k-1}})).
		$$
		Moreover,  from Remark~\ref{Remark consecutive sequences} and Lemma~\ref{lema assintoticos}, we see that for $ k $ sufficiently large,
		\begin{equation*}
		d(w_{n_k},w_{n_{k-1}})\ge C_0^{-1}\frac{1}{\sigma}
		\left(1-\frac{n_{k-1}}{n_k}\right)\frac{1}{\sigma^{1/\sigma}b(n_{k})}
		\ge \frac{1}{2}C_0^{-1}\frac{1}{\sigma}(1-\gamma)w_{n_k}.
		\end{equation*}
		Then, for $C_4:=\frac{1}{2}C_0^{-1}\frac{1}{\sigma}(1-\gamma) > 0$, we obtain
		\begin{eqnarray*}
			\frac{ \omega(R_k)}{V(w_{n_k})}\ge
			\frac{C_3}{1+ C_3} \frac{C_4}{1+ C_4} \frac{\omega(w_{n_{k}})}{V(w_{n_k})}.
		\end{eqnarray*}
		Therefore, thanks to hypothesis~(\ref{positive liminf}), we conclude that there exists a constant $ C_ 5 > 0 $ such that, for $ k=1 \mod 3 $ large enough,
		$$  S_{m_k} f(w_{n_{k-1}}) > C_5. $$
		
		We will show in Subsection~\ref{s:Sub3} that $m(f,T)=0$ for $ \xi $ large enough. Let us assume this fact for a moment and argue that the inequality $$f\le u\circ T-u$$ is impossible for every continuous function $u:[0,1] \to \mathbb R$. Suppose the opposite happens. Then, if $ k=1 \mod 3 $ is sufficiently large, we have shown that
		\begin{eqnarray*}
			u(w_{n_{k}}) = u\left(T^{m_k}\left(w_{n_{k-1}}\right)\right)
			&\ge & S_{m_k} f(w_{n_{k-1}}) + u(w_{n_{k-1}})\\
			&>&C_5 + u(w_{n_{k-1}}).
		\end{eqnarray*}
		Since $u$ is continuous at $0$, by letting $k\to +\infty$, we get a contradiction.
		
		\subsection{A condition for  $m(f,T)=0$}\label{s:Sub3}
		
		It remains to argue that, for $ \xi $  large enough, $ m(f,T) = 0 $.
		Since $f(0)=0$ and $\delta_0$ is $T$-invariant, clearly $ m(f,T) \ge \int f d\delta_0= f(0)=0 $.
		If $ \xi $ is sufficiently large, by choosing a suitable constant $ \gamma  \in (0, 1) $ and an appropriate initial point $ w_0 $ close enough to $ 0 $,
		we will show that for each $ x $ there is $n(x)$ such that
		$ S_{n(x)} f(x)\le 0 $. From Birkhoff's ergodic theorem, we thus conclude that $ m(f,T) \le 0 $,
		which completes the proof.
		
		We first choose $ \gamma \in (0, 1) $ satisfying
		\begin{equation}\label{escolha de gama}
		\gamma^{ 1 + 1/\sigma } > \frac{6}{7}.
		\end{equation}
		Note now that, replacing $ w_0 $ by $ w_{n_0} $ with $ n_0 $ large enough, we may assume that the constant $ C_0 $ in Remark~~\ref{Remark consecutive sequences} is as close as we want to $ 1 $.
		Thus, we suppose henceforth that
		\begin{equation}\label{limitando C_0}
		1 < C_0^2 \le \frac{7}{6}\gamma^{1+1/\sigma}.
		\end{equation}
		Furthermore, thanks to~\eqref{gamma mais 1}, if $ n_0 $ is sufficiently large, we may also assume that
		\begin{equation}\label{comportamento para todo k}
		\frac{1}{2} \gamma^{1+1/\sigma}  \le \frac{n_k b(n_k)}{n_{k+1}b(n_{k+1})} \quad \forall \, k \ge 0.
		\end{equation}
		If  $x\in [0,1]\backslash \displaystyle\bigcup_{k=1 \mod 3} I_k$, just take $n(x)=1$, since $f(x)\le 0.$
		Suppose then $x\in I_k$ for some $ k=1 \mod 3$. Define
		$$ p(x) := \min \{ p \ge 1 : T^p(x) \notin I_k \}. $$
		Note that
		$$ S_{p(x)} f(x) \le \# \{j \ge 0 : T^j(x) \in I_k \} \, \omega \Big( \frac{2}{5} \max\{d(w_{n_{k+1}}, w_{n_k}),  d(w_{n_k}, w_{n_{k-1}})\} \Big). $$
		Let us estimate the cardinality in the right term. Denote
		$$ L_k := \Big\lceil \frac{3}{7} C_0 \sigma^{1+1/\sigma}  n_{k} b(n_{k}) d(w_{n_{k}}, w_{n_{k-1}}) \Big\rceil. $$
		From Remark~\ref{Remark consecutive sequences},  we have $ d(w_{n_k}, w_{n_k - L_k}) \ge L_k C_0^{-1}\frac{1}{\sigma^{1+1/\sigma}} \frac{1}{n_k b(n_k)} > \frac{2}{5} d(w_{n_{k}}, w_{n_{k-1}}) $,
		which means that $ w_{n_k - L_k} $ is greater than the right endpoint of $ I_k $.
		Thanks to~\eqref{escolha de gama}, \eqref{limitando C_0} and~\eqref{comportamento para todo k},
		\begin{equation*}
		\frac{3}{7} C_0 \sigma^{1+1/\sigma}  n_{k+1} b(n_{k+1}) d(w_{n_{k+1}}, w_{n_{k}}) \le \frac{3}{7} C_0^2 \frac{n_{k+1}b(n_{k+1})}{n_k b(n_k)} (n_{k+1} - n_k) \le n_{k+1} - n_k,
		\end{equation*}
		so that $ L_{k+1}  \le n_{k+1} - n_k $.
		Hence, a similar reasoning shows that  $ w_{n_k + L_{k+1}} $ is smaller than the left endpoint of $ I_k $.
		Therefore, by the monotonicity of $ T $, we obtain
		\begin{align*}
		\# \{j : T^j(x) \in I_k \}
		& \le (L_k - 1) + (L_{k+1} - 1) \\
		& \le \frac{3}{7} C_0  \sigma^{1+1/\sigma}  n_{k+1} b(n_{k+1}) d(w_{n_{k+1}}, w_{n_{k-1}}).
		\end{align*}
		We have shown that
		\begin{equation}\label{majoracao_p}
		S_{p(x)} f(x) \le \frac{3}{7} C_0\sigma^{1+1/\sigma}  n_{k+1} b(n_{k+1}) d(w_{n_{k+1}}, w_{n_{k-1}}) \, \omega \big( d(w_{n_{k+1}}, w_{n_{k-1}}) \big).
		\end{equation}
		Now, for $ y \in \big[ w_{n_{k+1}} + R_{k+1}, \frac{1}{5} (3 w_{n_k} + 2 w_{n_{k+1}})\big] $, denote
		$$ q(y) := \min \{ q \ge 1 : d(T^q(y), w_{n_{k+1}}) < R_{k+1} \}. $$
		Clearly,
		$$ S_{q(y)} f(y) \le - \xi \, \# \big\{j \ge 0 : R_{k+1} \le d(T^j(y), w_{n_{k+1}}) \le \frac{1}{3} d(w_{n_{k+1}}, w_{n_k}) \big\} \, \omega(R_{k+1}). $$
		Thanks to Lemma~\ref{minoracao cardinalidade}, we obtain that
		\begin{equation}\label{majoracao_q}
		S_{q(y)} f(y) \le - \xi \, C_1 n_{k} b(n_{k}) d(w_{n_k}, w_{n_{k+1}}) \omega (R_{k+1}).
		\end{equation}
		We claim that, whenever $ \xi $ is sufficiently large, for $ n(x) := p(x) + q(T^{p(x)} (x)) $ one has $ S_{n(x)}f(x) \le 0 $ .
		Thanks to~\eqref{majoracao_p} and~\eqref{majoracao_q}, it is enough to prove that
		$$ \sup_k \frac{n_{k+1} b(n_{k+1}) d(w_{n_{k+1}}, w_{n_{k-1}}) \omega \big( d(w_{n_{k+1}}, w_{n_{k-1}}) \big)}{ n_{k} b(n_{k}) d(w_{n_k}, w_{n_{k+1}}) \omega(R_{k+1})} < \infty. $$
		Recalling the asymptotic equivalence~\eqref{gamma mais 1}, we just have to show that both suprema
		$$  \sup_k  \frac{d(w_{n_{k+1}}, w_{n_{k-1}})}{ d(w_{n_k}, w_{n_{k+1}})} \quad \text{and} \quad
		\sup_k  \frac{\omega \big( d(w_{n_{k+1}}, w_{n_{k-1}}) \big)}{ \omega (R_{k+1})} $$
		are finite. With respect to the first one, from~(\ref{minoracao e majoracao}) it is immediate that
		\begin{eqnarray}\label{majoracao quociente distancias}
		\nonumber \frac{d(w_{n_{k}}, w_{n_{k-1}})}{d(w_{n_{k+1}}, w_{n_k})} & \le & \frac{C_0 (n_{k}  - n_{k-1}) 1/\big[\sigma^{1+1/\sigma} n_{k-1} b(n_{k-1})\big]}
		{C_0^{-1} (n_{k+1}  - n_{k}) 1/\big[\sigma^{1+1/\sigma} n_{k+1} b(n_{k+1})\big]}\\
		&=& C_0^2 \, \frac{1 - \frac{n_{k-1}}{n_k}}{\frac{n_{k+1}}{n_k }- 1} \, \frac{n_{k+1}b(n_{k+1})}{n_{k}b(n_{k})} \, \frac{n_{k}b(n_{k})}{n_{k-1}b(n_{k-1})},
		\end{eqnarray}
		which ensures $ \frac{d(w_{n_{k+1}}, w_{n_{k-1}})}{ d(w_{n_k}, w_{n_{k+1}})} = 1 + \frac{d(w_{n_{k}}, w_{n_{k-1}})}{d(w_{n_{k+1}}, w_{n_{k}})} $ is bounded from above.	
		With respect to the second one, note first that, thanks to~\eqref{majoracao quociente distancias},
		\begin{equation*}\label{majoracao quociente distancia e Rk}
		\frac{d(w_{n_{k+1}}, w_{n_{k-1}})}{R_{k+1}} = 3C_0^3 \, \frac{n_{k+1} b(n_{k+1})}{n_{k}b(n_{k})}  \,  \frac{d(w_{n_{k+1}}, w_{n_{k-1}})}{ d(w_{n_k}, w_{n_{k+1}})}
		\end{equation*}
		is bounded from above. Hence, there exists a positive constant $ C_6 $ such that
		$ d(w_{n_{k+1}}, w_{n_{k-1}}) \le C_6 R_{k+1} $. By the monotonicity of $ \omega $ and Lemma~\ref{modulo constante}, we obtain
		$$	\frac{\omega\big( d(w_{n_{k+1}}, w_{n_{k-1}}) \big)}{ \omega (R_{k+1})} \le
		C_6+1<\infty.$$
		The proof is complete.
		
		\medskip
		
		\paragraph{Comments on the proof of Theorem~\ref{t:Theorem 1} for $x\mapsto x(1+V(x))$.}
		We consider now a subsequence $(w_{n_k})$ that fulfills  $ w_{n_{k-1}}= T^{n_{k}-n_{k-1}}(w_{n_k})$, where $T(x)= x(1+V(x))$ in a neighborhood of $0$.
		Note that orbits are moving monotonically away from the origin, that is, they are moving to the right instead of to the left as in the previous case.
		This merely produces a, let us say, \emph{reflexive effect} on our arguments, exchanging the roles of indices $  k=1 \mod 3 $ and $  k=2 \mod 3 $.
		In practical terms, we define $\Phi$ for this case as
		$$\Phi(x):= \left \{
		\begin{array}{cc}
		-\xi \varphi_{k}(x),& x\in I_k, \,\, k=1 \mod 3 \\
		
		\varphi_{k}(x),& x\in I_k, \,\, k=2 \mod 3 \\
		
		0,&  \textrm{otherwise}.
		\end{array}
		\right.$$
		Introducing $ f $ as in~(\ref{definicao funcao}) and supposing by a moment that  $m(f,T)=0$, we apply the same strategy to show that $f$ does not admit continuous sub-action.
		In fact, for $k=2\mod 3$ sufficiently large, using~\eqref{segundo minorante alternativo} one estimates  the number of iterates that remain in the interval $[\frac{1}{3}(2 w_{n_k}+ w_{n_{k+1}}), w_{n_k}-R_{k+1}]$  to conclude that
		$ S_{m_{k+1}} f(w_{n_{k+1}}) $ is bounded from below by a positive constant and thus to reach a contradiction.
		In order to show that, for the same choice of parameters~\eqref{escolha de gama}, \eqref{limitando C_0}, and~\eqref{comportamento para todo k}, $m(f,T)=0$ whenever $ \xi $ is sufficiently large, suitable adjustments
		are required to obtain that for $x\in I_k$ with $k= 2\mod 3$, there is $n(x)$ such that $S_{n(x)} f(x)\le 0.$ Similarly to the previous case, the key observation is that such a Birkhoff sum may be bounded from above
		by the difference
		of two terms, the first one takes into account the iterates that remain in $I_k$, the second one considers iterates that remain in $[\frac{1}{3}(2 w_{n_{k-1}}+w_{n_k}), w_{n_{k-1}}-R_k]$, and
		their ratio is uniformly bounded.
		
		\section*{Appendix: On the existence of  sub-actions}\label{existence}
		Since the analysis of the existence of sub-actions is a global issue, we fix a particular class of  dynamics  with intermittent behavior. Our working class of maps  with two branches  provides an example of situation in which one can guarantee the existence of sub-actions for potentials with various  moduli of continuity, highlighting clearly the associate  regularity of these sub-actions.
		Similar arguments are feasible  for intermittent dynamics with more inverse branches.
		
		At the best of our knowledge, there  are no previous works at such a level of  generality about  the regularity of potentials and sub-actions.
		\medskip

		Throughout this section we consider
		a class $ \mathscr J$ of one-dimensional maps, so that  each $T\in \mathscr J$ is a piecewise two to one interval map defined on $([0,1],d)$ with
		discontinuity $c\in(0,1)$ such that
		$ \displaystyle\lim_{x\to c^-} T(x)=~1 \textrm{ and } \lim_{x\to c^+} T(x)=~0.$ Moreover, $T$ takes the form $T(x):=x(1+ V(x))$  on $[0,c], $
		where for  some $\sigma>0$,  the continuous and increasing function $V:[0,+\infty)\to [0,1)$ is regularly varying with index $\sigma$ (recall \eqref{regularly variation}). Finally, we  assume that there is $\lambda>1$ such that  for all $x,y\in (c,1]$,
		$d(T(x),T(y))\ge \lambda d(x,y).$
		
		As in $\S$~\ref{statements}, $\mathcal M$ denotes the set of continuous, non-decreasing, concave modulus of continuity.
		For a given function $V$ as above, we consider  an appropriate $\omega \in \mathcal M$ satisfying the following assumption:
		\begin{quote}
			[A]	There exist constants  $\gamma>0$, $\xi_0>1$ and  $\eta_0\in (0,1)$ such that
			\begin{equation}\label{minoracao de omega}
			\frac{\omega(\xi h)}{V(\xi h)}\ge \xi^\gamma \frac{\omega(h)}{V(h)}, \qquad \forall\, h\in (0,\eta_0), \, \forall \, \xi\in (1,\xi_0].
			\end{equation}
			
		\end{quote}
		One can easily verify that, for $V$ and  $\omega$ fulfilling    \eqref{minoracao de omega},
		\begin{equation}\label{limite omega V}
		\lim_{h\to 0}\frac{\omega(h)}{V(h)}=0.
		\end{equation}
		The converse statement is not satisfied in general, see Remark~\ref{remark gama zero}.

		From Assumption A, we define a  modulus of continuity $\Omega\in \mathcal M$  so  that  potentials with modulus of continuity $\omega$  admit  sub-actions with modulus of continuity $\Omega$. Before we state this result, we first provide examples of  maps in $\mathscr J$ for which   condition \eqref{minoracao de omega} holds.
		
		\subsubsection*{Examples}\label{exemplos de existencia}
		A prototypical example  in $\mathscr J$ is the  Manneville-Pomeau interval map defined for some  $s\in(0,1)$ as $  T_s(x):= x(1 + x^s) \mod 1.$
		Consider the class of modulus of continuity $\omega_{\alpha,\beta}$ as in \eqref{modulus alpha beta}. For $s<\alpha<1$,  condition \eqref{minoracao de omega} follows immediately with $\gamma=\alpha -s$: for $h$ sufficiently small,
		\begin{equation*}
		\frac{\omega_{\alpha,\beta}(\xi h)}{(\xi h)^s}
		\ge
		\xi^{\alpha-s} \frac{h^\alpha(-\log h)^{-\beta}}{h^s}= \xi^{\alpha-s} \frac{\omega_{\alpha,\beta}(h)}{h^s}.
		\end{equation*}
		Another interesting   family of interval maps in $\mathscr J$  is given by
		$ H_\rho:[0,1]\to [0,1]$, for  $\rho\in (0,1]$, defined as
		\begin{equation*}
		H_\rho(x) =\left\{
		\begin{array}{ll}
		\frac{x}{(1-x^{\rho})^{1/\rho}}  & \textrm{ if } 0\le x\le 2^{-1/\rho},\\
		\frac{2^{1/\rho}x-1}{2^{1/\rho}-1} &\textrm{ if }  2^{-1/\rho}<x\le 1.
		\end{array}
		\right.
		\end{equation*}
		The function  $V(h)= \frac{1}{(1-h^\rho)^{1/\rho}}-1$ is continuous, increasing, regularly varying with index~$\rho$. For $\rho<\alpha<1$, we have that $\omega_{\alpha,\beta}$ and $V$ satisfy  condition \eqref{minoracao de omega}, since
		\begin{align*}
		\frac{\omega_{\alpha,\beta}(\xi h)}{V(\xi h)} \frac{V( h)}{\omega_{\alpha,\beta}(h)}&
		=\xi^\alpha\left(\frac{\log(\xi h)}{\log h}\right)^{-\beta}				                 		\frac{V(h)}{V(\xi h)}		
		\end{align*}
		implies that
		$\displaystyle
		\lim_{h\to 0}\frac{\omega_{\alpha,\beta}(\xi h)}{V(\xi h)} \frac{V( h)}{\omega_{\alpha,\beta}(h)}=
		\xi^\alpha  \lim_{h\to 0}\frac{V( h)}{V(\xi h)}
		=\xi^{\alpha-\rho}.
		$
		As another example, following \cite{Hol05}, consider a family  defined for $0<\tau<1$ and $\theta>0$ as
		\begin{equation*}
		T_{\tau,\theta}(x) =\left\{
		\begin{array}{ll}
		x+ \, \frac{2^\tau}{(\log 2)^{\theta+1}}x^{1+\tau} \vert \log x\vert^{\theta+1} & \textrm{ if } 0\le x\le 1/2, \\
		2x-1 &\textrm{ if } 1/2 < x \le 1.
		\end{array}
		\right.
		\end{equation*}
		In this case, the function $V_{\tau,\theta}(h)=\frac{2^\tau}{(\log 2)^{\theta+1}}h^{\tau}\vert \log h\vert^{\theta+1}$ is  regularly varying with index~$\tau$.
		Condition~\eqref{minoracao de omega} is satisfied, for instance, with the modulus of continuity  $\omega_k(h)=h\left(\log\left(\frac{1}{h^k}\right)+1\right)$  for $k\ge 1$ and  $ h $ sufficiently small. Indeed, one has
		\begin{align*}
		\frac{\omega_k(\xi h)}{V_{\tau,\theta}(\xi h)} \frac{V_{\tau,\theta}(h)}{\omega_k(h)} &
		=\xi^{1-\tau}\left\vert\frac{\log h}{\log(\xi h)}\right\vert^{\theta+1}
		\frac{1-k\log(\xi h)}{1-k\log h},			                		 \end{align*}
		so that
		$\displaystyle
		\lim_{h\to 0}\frac{\omega_k(\xi h)}{V_{\tau,\theta}(\xi h)} \frac{V_{\tau,\theta}
			(h)}{\omega_k(h)}=
		\xi^{1-\tau}\lim_{h\to 0}
		\frac{1-k\log(\xi h)}{1-k\log h}
		=
		\xi^{1-\tau}.
		$
		
		\begin{remark}\label{remark gama zero}
			[Condition~\eqref{minoracao de omega} is more restricted than \eqref{limite omega V}.]
			For $\theta>0$ and $k\ge 1,$ consider $T_{1,\theta}$ and $\omega_k$ as above.
			It is easy to see that
			\begin{equation*}
			\frac{\omega_k(h)}{V_{1,\theta}(h)}\to 0\quad  \textrm{ as } h\to 0.
			\end{equation*}
			However, from
			$\displaystyle
			\frac{\omega_k(\xi h)}{V_{1,\theta}(\xi h)} \frac{V_{1,\theta}(h)}{\omega_k(h)}   =
			\left\vert\frac{\log h}{\log(\xi h)}\right\vert^{\theta+1}
			\frac{1-k\log(\xi h)}{1-k\log h}, 		                		
			$
			we get
			\begin{align*}
			\lim_{h\to 0}\frac{\omega_k(\xi h)}{V_{1,\theta}(\xi h)} \frac{V_{1,\theta}(h)}{\omega_k(h)}=1.
			\end{align*}
			Hence,  property \eqref{limite omega V} is satisfied, however~\eqref{minoracao de omega} fails.
		\end{remark}
		
		\subsubsection*{Defining a continuous increasing concave modulus of continuity}
		For $V$ and $\omega$ fulfilling \eqref{minoracao de omega}, let
		$\vartheta_0:[0,\infty)\to [0,\infty)$ be the continuous function defined as
		\begin{equation}
		\vartheta_0(x):=\left\{
		\begin{array}{ll}
		\frac{\omega(x)}{V(x)},& x>0,  \\
		0,& x= 0,
		\end{array}
		\right.
		\end{equation}
		and let $ \vartheta_1:[0,\infty)\to [0,\infty) $ be  the continuous increasing function given as
		\begin{equation}
		\vartheta_1(x)=\left\{
		\begin{array}{ll}
		\displaystyle \max_{0\le y\le x} \vartheta_0(y),& 0\le x\le 1,\\
		\displaystyle\max_{[0,1]}\vartheta_0, & x\ge 1,
		\end{array}
		\right.
		\end{equation}
		Denote then $\vartheta_1^*$  the \emph{concave conjugate Legendre transform} of $\vartheta_1$, defined  as
		\begin{equation}
		\vartheta_1^*(x)= \min_{y\in[0,\infty)}[xy-\vartheta_1(y)], \quad \forall\, x\ge 0.
		\end{equation}
		By the very definition, $\vartheta_1^*$ is concave, increasing and continuous on $(0,\infty).$ To see that $\vartheta^*$ is continuous at $0$, note that
		$\vartheta^*_1(0)=-\max_{[0,1]}\vartheta_0$ and
		$\vartheta_1^*(0)\le \vartheta_1^*(\epsilon)\le \epsilon -\vartheta_1(1)= \epsilon+\vartheta_1^*(0).$
		For the continuous concave increasing function
		\begin{equation}
		\vartheta_2(x)=\min\{\vartheta_1^*(x), \vartheta_1^*(1)\},
		\end{equation}
		a similar reasoning shows that
		its  \emph{concave conjugate Legendre transform},
		\begin{equation}\label{concave biconjugate}
		\vartheta_2^{*}(x)= \min_{y\in [0,\infty)}[xy-\vartheta_2(y)], \quad \forall\,  x\ge 0,
		\end{equation}
		is also a continuous concave increasing function. Moreover
		$\vartheta_0(x)\le \vartheta_1(x)\le \vartheta_2^*(x)$ for all $x\in [0,1].$
		Actually, $\vartheta_2^{*}$ is the \emph{smallest} concave function that lies above $\vartheta_1$ on~$[0,1].$
		Note that $\vartheta_2^*(0)=-\vartheta_1^*(1).$
		
		We have obtained a function $\Omega:= \vartheta_2^{*}+\vartheta_1^*(1)$ that belongs to $\mathcal M$.
		\begin{theorem}\label{theorem existence} Let $T:[0,1]\to [0,1]$ be a map in $\mathscr J$ with discontinuity $c\in (0,1)$ such that $T(x)=x(1+V(x))$  for all $x\in [0,c]$, where  $V$ is regularly varying at~$0$. Let $\omega $ be a modulus of continuity in $\mathcal M$ for which  Assumption A holds. Then, every $f\in \mathscr C^{\omega}([0,1])$ admits continuous sub-actions in $ \mathscr C^{\Omega}([0,1])$, where $\Omega$ is defined by the process $(23)$-$(27).$
		\end{theorem}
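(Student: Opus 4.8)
\emph{Set-up and the candidate sub-action.} Since $f-m(f,T)$ still lies in $\mathscr C^{\omega}([0,1])$ and only shifts the cohomological constant, I may assume $m(f,T)=0$; invariance of $\delta_0$ then gives $f(0)\le 0$. Write $g_0:[0,1]\to[0,c]$ and $g_1:[0,1)\to(c,1]$ for the two inverse branches of $T$, so that near the origin $g_0(x)=x(1-W(x))$ with $W$ regularly varying at $0$ of index $\sigma$, while $g_1$ is a $\lambda^{-1}$-contraction. Following Contreras--Lopes--Thieullen, I would set
\[
u(x):=\sup_{n\ge 0}\ \sup_{w\in\{0,1\}^{n}}S_{n}f\big(g_{w}(x)\big),\qquad S_{n}f(y):=\sum_{j=0}^{n-1}f(T^{j}y),
\]
with $g_{w}=g_{i_{1}}\circ\cdots\circ g_{i_{n}}$ and $S_{0}f\equiv 0$ (the finitely many boundary words on which some composition is undefined being harmless). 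The cohomological inequality is then automatic: given an orbit segment $y,Ty,\dots,T^{n}y=x$, one has $T^{n+1}y=Tx$ and $S_{n+1}f(y)=S_{n}f(y)+f(x)$, hence $u(Tx)\ge S_{n}f(y)+f(x)$, and taking the supremum yields $u(Tx)\ge u(x)+f(x)$, i.e. $f+u-u\circ T\le 0=m(f,T)$. It remains to show $u$ is finite and that $u\in\mathscr C^{\Omega}([0,1])$.

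\emph{Finiteness.} Iterating Assumption~A shows $\vartheta_{0}(h)=\omega(h)/V(h)\le Ch^{\gamma}$ for small $h$ (the ratio $\vartheta_{0}(h)/h^{\gamma}$ being non-decreasing near $0$). Applying the regularly-varying asymptotics of Lemma~\ref{lema assintoticos} to the $g_{0}$-iterates $w_{k}=g_{0}^{k}(w_{0})$ — for which $w_{k}\sim\sigma^{-1/\sigma}b(k)^{-1}$ and $V(w_{k})\sim(\sigma k)^{-1}$ — one gets $\omega(w_{k})=\vartheta_{0}(w_{k})V(w_{k})\lesssim w_{k}^{\gamma}V(w_{k})\asymp k^{-1-\gamma/\sigma}\mathcal B(k)^{-\gamma}$, so that $\sum_{k\ge 1}\omega(w_{k})<\infty$ by Karamata's theorem. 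Now, for any orbit segment as above, $S_{n}f(y)\le\sum_{j}\big(f(T^{j}y)-f(0)\big)\le C\sum_{j}\omega\big(d(T^{j}y,0)\big)$; splitting the orbit according to its maximal visits to a fixed neighbourhood $[0,\delta_{0}]$ of $0$, each such visit is comparable to a tail $(w_{k},\dots,w_{0})$ and hence contributes at most $C\sum_{k\ge 1}\omega(w_{k})+C\omega(\delta_{0})$, whereas on the complement $T$ is uniformly expanding and bounded distortion together with $m(f,T)=0$ keeps the remaining contribution under control. Thus $0\le u<\infty$ on $[0,1]$.

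\emph{Regularity — the main step.} Fix $x,x'$ with $d(x,x')=h$ small, pick $\epsilon>0$ and $w$ with $S_{n}f(g_{w}x)>u(x)-\epsilon$; then, $g_{w}(x')$ being defined,
\[
u(x)-u(x')\ \le\ \epsilon+C\sum_{j=0}^{n-1}\omega(d_{j}),\qquad d_{j}:=d\big(T^{j}g_{w}x,\,T^{j}g_{w}x'\big).
\]
I would split $w$ into its maximal $g_{0}$-runs — the neutral excursions of the orbit into $[0,\delta_{0}]$ — separated by the $g_{1}$'s and the uniformly-hyperbolic stretches. Over the hyperbolic part the $d_{j}$ decay geometrically, with boundedly many indices at each scale, so that portion is $\lesssim\sum_{r\ge 0}\omega(\mu^{-r}h)$, a modulus of continuity in $h$ dominated by $\Omega$. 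Over the $m$-th neutral excursion, with entry distance $\delta_{(m)}\le\lambda^{-(m-1)}h$ (at least $m-1$ preceding $g_{1}$'s) and entry depth $q_{m}$, Lemma~\ref{lema assintoticos} and Remark~\ref{Remark consecutive sequences} yield $d(g_{0}^{\,i}z,g_{0}^{\,i}z')\asymp\frac{q_{m}b(q_{m})}{(q_{m}+i)\,b(q_{m}+i)}\,\delta_{(m)}$, capped by the local scale $w_{q_{m}+i}$; summing $\omega$ over the excursion and using the regular variation of $V$ turns this into a series whose value is controlled, through the defining chain $\vartheta_{0}\le\vartheta_{1}\le\vartheta_{2}^{*}$ and the Young-type inequality $\vartheta_{2}^{*}(a)+\vartheta_{2}(b)\le ab$ underlying $\Omega=\vartheta_{2}^{*}+\vartheta_{1}^{*}(1)$, by $C\,\Omega(\delta_{(m)})$. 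Since the $\delta_{(m)}$ decay geometrically, Lemma~\ref{modulo constante} and concavity of $\Omega$ add all contributions up to $\sum_{j}\omega(d_{j})\le C\,\Omega(h)$; swapping $x$ and $x'$ gives $|u(x)-u(x')|\le C\,\Omega(d(x,x'))$, which finishes the proof.

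\emph{Where the difficulty lies.} The delicate point is the per-excursion estimate and, above all, its uniformity in the entry depth $q_{m}$, so that deep neutral excursions occurring in near-optimal orbits cannot accumulate; this is exactly what the two successive Legendre transforms $\vartheta_{1}\mapsto\vartheta_{1}^{*}\mapsto\vartheta_{2}\mapsto\vartheta_{2}^{*}$ are engineered to deliver, and it is here that Assumption~A, rather than the weaker $\omega/V\to 0$, is indispensable (cf. Remark~\ref{remark gama zero}). A smaller but genuine technicality is carrying the bounded-distortion / first-return argument for finiteness across the discontinuity $c$.
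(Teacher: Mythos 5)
Your candidate sub-action is the right one --- it coincides with the paper's $U_f(x)=\sup_{k\ge0}\sup_{T^k y=x}S_k(f-m(f,T))(y)$ --- and your verification of the cohomological inequality is correct. The problem is that both substantive claims, finiteness and the $\Omega$-modulus, are left as assertions, and the route you sketch does not close them. In the regularity step, the statement that the $m$-th neutral excursion contributes at most $C\,\Omega(\delta_{(m)})$ \emph{uniformly in the entry depth} $q_m$ is precisely the content of the theorem; you flag it yourself as ``the delicate point'', but the Fenchel--Young inequality $\vartheta_2^*(a)+\vartheta_2(b)\le ab$ does not by itself produce such a bound, and nothing in the sketch explains how the sum of $\omega(d(g_0^i z,g_0^i z'))$ over an excursion of arbitrary depth is dominated by $\Omega$ of the entry distance. (The claimed two-point asymptotic $d(g_0^i z,g_0^i z')\asymp \frac{q_m b(q_m)}{(q_m+i)b(q_m+i)}\,\delta_{(m)}$ also requires a distortion estimate for the neutral branch that you have not established; Remark~\ref{Remark consecutive sequences} concerns gaps along a single orbit, not the contraction of the branch on pairs of nearby points.) The finiteness argument has a similar hole: $f(T^jy)-f(0)\le C\omega(d(T^jy,0))$ is of constant order on the part of the orbit away from $0$, so your bound grows linearly in the time spent there, and ``bounded distortion together with $m(f,T)=0$ keeps the remaining contribution under control'' is the conclusion you need, not an argument for it.

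The paper closes both gaps with a mechanism that avoids any decomposition into excursions, and you may want to adopt it. Lemma~\ref{desigualdade distancia} gives the uniform one-step estimate $d(Tx,Ty)\ge d(x,y)\big(1+C_7V(d(x,y))\big)$ for nearby points anywhere in $[0,1]$ (not only near the fixed point); Assumption~A, transported through the two Legendre transforms, yields the single-step Lyapunov inequality $\Omega\big(h(1+C_7V(h))\big)\ge\Omega(h)+C_8\,\omega(h)$, whose whole point is that the decrement of $\Omega$ along one backward step dominates $\omega$ of the current separation. Telescoping this along an arbitrary backward orbit gives $\sum_{j=1}^{k}\omega(d(x_j,y_j))\le C_8^{-1}\Omega(d(x_0,y_0))$ (Proposition~\ref{lemma concave conjugate}) with no case analysis on where the orbit spends its time, hence equicontinuity of the $g_k$ with modulus $\Omega$; uniform boundedness (your finiteness) then follows from equicontinuity together with an empirical-measure argument exploiting $m(f,T)=0$ (Lemma~\ref{suma de Birkhoff}). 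If you wish to keep your per-excursion strategy, the missing step is exactly a proof of this one-step inequality --- at which point the excursion decomposition becomes unnecessary.
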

		\subsubsection*{Proof of the  theorem}
		In the following results we will assume the hypotheses of Theorem~\ref{theorem existence}. In particular,  we keep in mind all the  constants of Assumption A.
		\begin{lemma}\label{desigualdade distancia}
			There are constants $\varrho_T>0$ and $C_7\in (0,\min\{\xi_0,\eta_0^{-1}\}-1]$ such that for all $x,y\in [0,1]$, with $d(x,y)<\varrho_T$, we have
			\begin{equation}
			d(T(x),T(y))\ge d(x,y) \big(1+C_7 V(d(x,y))\big).
			\end{equation}
		\end{lemma}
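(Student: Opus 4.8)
The plan is to separate the estimate according to which branch of $T$ the points $x,y$ lie in. On the expanding branch $(c,1]$ the hypothesis $d(T(x),T(y))\ge\lambda d(x,y)$ with $\lambda>1$ gives the conclusion trivially, provided we choose $C_7$ small enough and $\varrho_T$ small enough that $1+C_7V(d(x,y))\le\lambda$ for $d(x,y)<\varrho_T$ (recall $V<1$, so $C_7\le\lambda-1$ suffices). The genuine work is on the branch $[0,c]$, where $T(x)=x(1+V(x))$. Here I would first note that for $0\le y<x\le c$ we have, using monotonicity of $V$,
\begin{equation*}
d(T(x),T(y))=x(1+V(x))-y(1+V(y))\ge (x-y)+\big(xV(x)-yV(x)\big)=d(x,y)\big(1+V(x)\big)\ge d(x,y)\big(1+V(d(x,y))\big),
\end{equation*}
since $x\ge x-y=d(x,y)$ and $V$ is increasing. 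This already gives the claim on the left branch with constant $1$ in place of $C_7$; so the only reason a constant $C_7<1$ appears at all is to absorb the mixed case and the right branch into a single clean statement.

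Next I would treat the mixed case, say $y\le c<x$ with $T$ continuous "across" $c$ in the sense that $\lim_{x\to c^-}T(x)=1$, $\lim_{x\to c^+}T(x)=0$: if $x$ is close to $c$ from the right, $T(x)$ is close to $0$, while $T(y)$ with $y\le c$ can be anywhere in $[0,1]$, so there is no direct expansion. The resolution is that for $d(x,y)<\varrho_T$ small, the points $x,y$ cannot straddle $c$ unless both lie within distance $\varrho_T$ of $c$; one then uses a one-step argument or simply shrinks $\varrho_T$ below a gap that separates the relevant pieces of the orbit structure. Concretely, I expect the intended reading is that $\varrho_T$ is chosen so small that $d(x,y)<\varrho_T$ forces $x,y$ into the \emph{same} branch (this is possible because the closure of the left branch meets the right branch only at the single point $c$, and $T$ restricted to each closed branch is uniformly continuous); under that choice the mixed case is vacuous and we are reduced to the two cases already handled.

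The remaining bookkeeping is to fix the constants: take $\varrho_T>0$ small enough (a) to exclude the straddling case and (b) so that $\varrho_T<\eta_0$ and $1+C_7V(h)\le\xi_0$ for all $h<\varrho_T$, which needs $C_7\le(\xi_0-1)/\sup_{h<\varrho_T}V(h)$; then take $C_7:=\min\{1,\lambda-1,\xi_0-1,\eta_0^{-1}-1\}$ (shrinking $\varrho_T$ further if needed so that $V(h)\le1$ makes all these compatible), which lies in $(0,\min\{\xi_0,\eta_0^{-1}\}-1]$ as required. The main obstacle, and the only subtle point, is justifying that small $d(x,y)$ genuinely precludes the straddling configuration — i.e. controlling the behavior of $T$ near the discontinuity $c$; everything else is the elementary monotonicity estimate on $[0,c]$ and the hypothesis on $(c,1]$.
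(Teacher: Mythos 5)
Your treatment of the two pure cases is fine and matches the paper: on $[0,c]$ the monotonicity of $V$ gives $d(T(x),T(y))\ge d(x,y)(1+V(d(x,y)))$ exactly as you compute, and on $(c,1]$ the uniform expansion $\lambda>1$ together with $V<1$ gives the bound with constant $\lambda-1$. But your reduction of the mixed case to vacuity is a genuine gap, and it is the one point you yourself flag as subtle. No choice of $\varrho_T>0$ can force $x,y$ with $d(x,y)<\varrho_T$ into the same branch: taking $x=c-\epsilon$ and $y=c+\epsilon$ with $2\epsilon<\varrho_T$ produces a straddling pair at arbitrarily small distance. The fact that the two branches meet only at the single point $c$ is precisely why straddling pairs at small distance always exist, not why they can be excluded. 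Since $T(x)\to 1$ as $x\to c^-$ while $T(y)\to 0$ as $y\to c^+$, the images $T(x),T(y)$ of a straddling pair are far apart even though $x,y$ are close, so the case is not only nonvacuous but is where the discontinuity actually enters; it cannot be absorbed into uniform continuity of the branches.

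The paper handles this case directly. For $c-\varrho_T/2\le x<c<y\le c+\varrho_T/2$ one first shrinks $\varrho_T$ so that $d(T(x),T(y))\ge 1/2$, whence $d(T(x),T(y))\ge 1-d(T(x),T(y))=(1-T(x))+T(y)$. Writing $1-T(x)=\lim_{t\to c^-}d(T(t),T(x))$ and $T(y)=\lim_{t\to c^+}d(T(y),T(t))$ and applying the two pure-branch estimates to each limit yields
\begin{equation*}
d(T(x),T(y))\ge d(x,y)+d(c,x)\,V(d(c,x))+(\lambda-1)\,d(y,c)\,V(d(y,c)).
\end{equation*}
Since one of $d(c,x)$, $d(y,c)$ is at least $\tfrac{1}{2}d(x,y)$, and since regular variation lets one choose $\varrho_T$ with $V(h/2)\ge 2^{-(\sigma+1)}V(h)$ on $[0,\varrho_T]$, the correction term is at least $2^{-(\sigma+2)}\min\{1,\lambda-1\}\,d(x,y)V(d(x,y))$; this is where the final constant $C_7=\min\{2^{-(\sigma+2)},\,(\lambda-1)2^{-(\sigma+2)},\,\xi_0-1,\,\eta_0^{-1}-1\}$ comes from. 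You would need to supply an argument of this kind (or some substitute) for the straddling configuration before your proof is complete.
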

		\begin{proof}
			Let $x,y\in [0,c]$ with $x<y$. Since $V$ and $T$ are increasing, note that
			\begin{align*}
			d(T(x),T(y))=  d(x,y)+ d(x,y)\,V(y)+x\,(V(y)-V(x))
			\ge d(x,y)\,\big(1+ V(d(x,y))\big).
			\end{align*}
			Consider now $x,y\in (c,1]$. Since $V([0,1])\subset [0,1)$,  we clearly have
			\begin{align*}
			d(T(x),T(y))\ge \lambda\, d(x,y)\, \ge d(x,y)\, \big(1+(\lambda-1)\, V(d(x,y))\big).
			\end{align*}
			Fix $\varrho>0$ such that, for $x\in [c-\varrho/2,c)$ and $y\in (c,c+\varrho/2]$ it follows that $d(T(x),T(y))\ge 1/2.$
			We choose $\varrho_T\in (0,\varrho)$ such that $V\big(\frac{1}{2}h\big)\ge \frac{1}{2^{\sigma+1}} V(h)$
			for all  $h\in [0,\varrho_T]$.
			Then  for
			$c-\varrho_T/2\le x<c<y\le c+\varrho_T/2$, 	\begin{align*}
			d(T(x),T(y))&\ge  1-d(T(x),T(y))= \lim_{t\to c^-} d(T(t),T(x))+\lim_{t\to c^{+}}d(T(y),T(t))\\
			&\ge \lim_{t\to c^{-}}  d(t,x)\,\big(1+ V(d(t,x))\big)+ \lim_{t\to c^+}d(y,t)\,\big(1+ (\lambda-1)\,V(d(y,t))\big)\\
			&= d(x,y)+ d(c,x)\,V(d(c,x))+ (\lambda-1)\, d(y,c)\, V(d(y,c)).
			\end{align*}
			Suppose that $d(c,x)\ge d(y,c)$, then  $2\, d(c,x)\ge d(x,y)$ and
			\begin{align*}
			d(T(x),T(y))&\ge
			d(x,y)+\frac{1}{2}\, d(x,y) \,V\Big(\frac{1}{2} d(x,y)\Big)
			\ge  d(x,y)+\frac{1}{2^{\sigma+2}}\, d(x,y)\, V(d(x,y)).
			\end{align*}
			Similarly, if $d(c,y)\ge d(x,c)$, then $2\, d(c,y)\ge  d(x,y)$ and
			\begin{align*}
			d(T(x),T(y))
			\ge d(x,y)+\frac{(\lambda-1)}{2^{\sigma+2}} \,d(x,y)\, V(d(x,y)).
			\end{align*}
			Take $C_7:=\displaystyle \min\left\{\frac{1}{2^{\sigma+2}} ,\frac{\lambda-1}{2^{\sigma+2}},\xi_0-1, \frac{1}{\eta_0}-1\right\}.$
		\end{proof}
		\begin{proposition}\label{lemma concave conjugate}
			There are constants $\varrho_{T,\omega}>0$  and
			$C_8>0$ such that, given  a sequence $\{x_k\}_{k\ge 0} $ in $[0,1]$,  with $T(x_{k+1})=x_k$ for $k\ge 0$, and   a point $y_0\in [0,1] $ with  $d(x_0,y_0)< \varrho_{T,\omega}$, there is $\{y_k\}_{k\ge 1}\subset [0,1],$ with $T(y_{k+1})=y_k$ for $k\ge 0$, satisfying
			\begin{equation}\label{equation concave conjugate}
			\Omega\big(d(x_k,y_k)\big)+ C_8\, \sum_{j=1}^k \omega\big(d(x_j,y_j)\big) \le \Omega\big(d(x_0,y_0)\big) \qquad \forall\, k\ge 1.
			\end{equation}
			
		\end{proposition}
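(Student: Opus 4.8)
The plan is to build the pre-orbit $\{y_k\}$ inductively, at each step choosing $y_{k+1}$ to be the appropriate pre-image of $y_k$ under the relevant branch of $T$, so that $d(x_{k+1},y_{k+1})$ is controlled from above by $d(x_k,y_k)$. The driving estimate is Lemma~\ref{desigualdade distancia}: since $T$ expands distances by a factor at least $1+C_7 V(d(x,y))$ when points are close, an inverse branch \emph{contracts} distances, and the contraction is quantified by the same factor. Concretely, writing $h_k:=d(x_k,y_k)$, Lemma~\ref{desigualdade distancia} gives $h_k=d(T(x_{k+1}),T(y_{k+1}))\ge h_{k+1}(1+C_7 V(h_{k+1}))$, hence $h_{k+1}\le h_k$ and, after rearranging, $h_k-h_{k+1}\ge C_7 h_{k+1}V(h_{k+1})\ge C_7' h_k V(h_k)$ for a slightly smaller constant (using that $V$ is increasing and $h_{k+1}$ is comparable to $h_k$). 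One must first fix $\varrho_{T,\omega}\le\varrho_T$ small enough that all the iterates $h_k$ stay in the regime where Lemma~\ref{desigualdade distancia} and Assumption~A apply, and that the branch of $T$ containing $x_{k+1}$ can be followed by $y_{k+1}$; here the hypothesis that $T$ is two-to-one with a single discontinuity, together with $h_0<\varrho_{T,\omega}$ and the monotone decay of $h_k$, keeps $x_{k+1}$ and $y_{k+1}$ on the same branch.

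The next step is to convert the recursion $h_k-h_{k+1}\ge C_7' h_k V(h_k)$ into the telescoping inequality~\eqref{equation concave conjugate} for $\Omega$. Since $\Omega=\vartheta_2^*+\vartheta_1^*(1)$ is concave and increasing, for $h_{k+1}\le h_k$ we have $\Omega(h_k)-\Omega(h_{k+1})\ge \Omega'(h_k)\,(h_k-h_{k+1})\ge \Omega'(h_k)\,C_7' h_k V(h_k)$, using a right-derivative or subgradient of $\Omega$. The point of the whole Legendre-transform construction~(23)--(27) is precisely that $\vartheta_2^*$ is the least concave majorant of $\vartheta_1$ on $[0,1]$, and $\vartheta_1$ in turn dominates $\vartheta_0=\omega/V$; one extracts from this that $x\,\Omega'(x)\ge c_0\,\vartheta_0(x)=c_0\,\omega(x)/V(x)$ for a universal $c_0>0$ on $(0,1]$ --- this is the standard fact that for a concave function its "radial derivative" $x\mapsto x\,\Omega'(x)$ controls the function from below, combined with $\Omega\ge\vartheta_1\ge\vartheta_0$. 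Plugging this in yields $\Omega(h_k)-\Omega(h_{k+1})\ge c_0 C_7'\,\omega(h_k)$. Summing from $j=1$ to $k$ and telescoping gives $\Omega(h_0)-\Omega(h_k)\ge c_0 C_7'\sum_{j=1}^k\omega(h_j)$, which is exactly~\eqref{equation concave conjugate} with $C_8:=c_0 C_7'$.

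The step I expect to be the main obstacle is the quantitative inequality $x\,\Omega'(x)\gtrsim \omega(x)/V(x)$, i.e.\ extracting a usable lower bound on the (one-sided) derivative of $\Omega$ from the fact that $\Omega$ is the least concave majorant of $\vartheta_1\ge\omega/V$. A clean route is to use Assumption~A directly on $\vartheta_0$: inequality~\eqref{minoracao de omega} says $\vartheta_0(\xi h)\ge \xi^\gamma\vartheta_0(h)$ for $h<\eta_0$, $\xi\in(1,\xi_0]$, which forces $\vartheta_1$ (being the running maximum of $\vartheta_0$) to satisfy a comparable growth estimate, and a function with such lower growth has least concave majorant whose derivative obeys $x\,\Omega'(x)\ge \mathrm{const}\cdot\vartheta_1(x)\ge \mathrm{const}\cdot\vartheta_0(x)$ near $0$; away from $0$ everything is bounded and the estimate is trivial by compactness. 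One then needs to be slightly careful that the constant $c_0$ is uniform down to $0$, which is where~\eqref{minoracao de omega} (rather than merely~\eqref{limite omega V}) is essential --- as Remark~\ref{remark gama zero} shows, the bare limit condition is not enough. Once this derivative bound is in hand, the telescoping argument above is routine, and the proposition follows with $\varrho_{T,\omega}$ chosen at the outset to absorb all the smallness requirements.
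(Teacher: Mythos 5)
Your skeleton is the same as the paper's: choose $y_{k+1}$ as the same-branch pre-image of $y_k$, invoke Lemma~\ref{desigualdade distancia} to get $d(x_k,y_k)\ge d(x_{k+1},y_{k+1})\bigl(1+C_7V(d(x_{k+1},y_{k+1}))\bigr)$, and then use concavity of $\Omega$ together with Assumption~A to turn each step into a gain of $C_8\,\omega(\cdot)$, telescoping at the end. Where you genuinely diverge is in how concavity is exploited. The paper never differentiates: writing $h=d(x_{k+1},y_{k+1})$, it decomposes $h(1+C_7V(h))=(1-V(h))h+V(h)(1+C_7)h$ and applies the convex-combination inequality to get
\begin{equation*}
\Omega\bigl(h(1+C_7V(h))\bigr)\ge \Omega(h)+V(h)\bigl(\vartheta_2^*((1+C_7)h)-\vartheta_2^*(h)\bigr)\ge \Omega(h)+\omega(h)\Bigl(\tfrac{\vartheta_2^*((1+C_7)h)}{\vartheta_2^*(h)}-1\Bigr),
\end{equation*}
using $\vartheta_2^*\ge\vartheta_0=\omega/V$; the whole step is then reduced to the single claim $\vartheta_2^*(\xi h)\ge\xi^\gamma\vartheta_2^*(h)$, which the paper proves by pushing Assumption~A from $\vartheta_0$ to $\vartheta_1$ and then through the (order-reversing) Legendre conjugations, yielding $C_8=(1+C_7)^\gamma-1$. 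Your route via the supergradient bound $x\,\Omega'(x)\ge c_0\,\omega(x)/V(x)$ is equivalent in substance --- indeed the bound follows from the very same claim, since for concave $\Omega$ one has $h\,\Omega'_+(h)\ge\frac{\xi}{\xi-1}\bigl(\vartheta_2^*(\xi h)-\vartheta_2^*(h)\bigr)\ge\frac{\xi^\gamma-1}{\xi-1}\vartheta_0(h)$ --- so the step you flag as ``the main obstacle'' is exactly the content of the paper's Legendre computation. (It can also be seen without Legendre transforms: if $g$ is the least concave majorant of $\vartheta_1$ and $\vartheta_1(\xi h)\ge\xi^\gamma\vartheta_1(h)$, then $x\mapsto\xi^{-\gamma}g(\xi x)$ is a concave majorant of $\vartheta_1$, hence dominates $g$.) The paper's version buys a shorter argument and an explicit constant; yours buys a statement ($x\,\Omega'(x)\gtrsim\omega/V$) that is perhaps more transparent dynamically.

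Two caveats on your execution. First, your passage from $h_k-h_{k+1}\ge C_7h_{k+1}V(h_{k+1})$ to $h_k-h_{k+1}\ge C_7'h_kV(h_k)$ does not follow from ``$V$ is increasing'' (monotonicity gives the reverse inequality $V(h_{k+1})\le V(h_k)$); you need $V(h_{k+1})\ge V\bigl(h_k/(1+C_7)\bigr)\ge cV(h_k)$, which holds for small $h_k$ because $V$ is regularly varying at $0$ --- the same fact the paper uses inside the proof of Lemma~\ref{desigualdade distancia}. The paper's convex-combination form sidesteps this entirely by evaluating everything at $h_{k+1}$ and only invoking monotonicity of $\Omega$ to compare with $\Omega(h_k)$. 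Second, your remark that the derivative bound is ``trivial by compactness'' away from $0$ is not needed and not quite right as stated (a concave increasing $\Omega$ can have very small derivative on $[\eta_0,1]$); but since $d(x_k,y_k)\le d(x_0,y_0)<\varrho_{T,\omega}\le\eta_0$ for all $k$, only the regime $h<\eta_0$ ever occurs, so nothing is lost. With these points repaired, your argument is complete and yields the proposition with $C_8$ comparable to the paper's.
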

		\begin{proof} Let $\varrho_{T,\omega}=\min\{\varrho_T,\eta_0\}$, where $\varrho_T$ is as in the statement of  Lemma \ref{desigualdade distancia}. For   $x_0, x_1,  y_0\in [0,1]$  with  $T(x_1)=x_0$ and  $d(x_0,y_0)<\varrho_{T,\omega}$, we can choose  $y_1\in T^{-1}(y_0)$ with
			$d(x_1,y_1)\le d(x_0,y_0)<\varrho_{T,\omega}$. Then from
			Lemma~\ref{desigualdade distancia},
			\begin{equation*}
			d(x_0,y_0)=d(T(x_1), T(y_1))\ge d(x_1,y_1)\, \big(1+ C_7\, V(d(x_1,y_1))\big).
			\end{equation*}
			Since $\Omega$ is increasing, we have
			$
			\Omega\big(d(x_0,y_0)\big) \ge \Omega\big(d(x_1,y_1)\,\big (1+ C_7\, V(d(x_1,y_1)\big)\big).
			$  	
			For $h=d(x_1,y_1)$, we can  write
			$$
			\Omega\big(h (1+C_7\, V(h))\big)
			=
			\Omega\big((1-V(h))\,h+V(h)\,(1+C_7)\, h)\big).
			$$
			As  $\Omega=\vartheta_2^*+\vartheta_1^*(1)$ is concave, we see that
			\begin{align*}
			\Omega\big(h\,(1+C_7\, V(h))\big)&\ge
			(1-V(h))\,\Omega(h)+V(h)\,\Omega\big((1+C_7)\,h\big)\\
			& =
			\Omega(h)+ V(h)\,\Big (\vartheta_2^*((1+C_7)\,h)-\vartheta_2^*(h)\Big).
			\end{align*}
			Recalling that $\vartheta_2^*\ge \vartheta_0$, we have
			\begin{align*}
			\Omega\big(h\,(1+C_7\, V(h))\big)&\ge
			\Omega(h)+ V(h)\,\vartheta_2^*(h)\,\Big(\frac{\vartheta_2^*\big((1+C_7)\,h\big)}{\vartheta_2^*(h)}-1\Big)\\
			&\ge
			\Omega(h)+ \omega(h)\,\Big(\frac{\vartheta_2^*\big((1+C_7)\,h\big)}{\vartheta_2^*(h)}-1\Big).
			\end{align*}
			We claim that $\frac{\vartheta_2^*((1+C_7)\, h)}{\vartheta_2^*(h)}\ge (1+C_7)^\gamma$. As a matter of fact, following Assumption~A,  for $1+C_7\le\xi_0$, since  $h=d(x_1,y_1)< \varrho_{T,\omega}\le \eta_0,$
			\begin{equation*}
			\displaystyle  \frac{\vartheta_0((1+C_7)\,h)}{\vartheta_0(h)}\ge (1+C_7)^\gamma,\quad \textrm{ and thus } 	
			\displaystyle \frac{\vartheta_1((1+C_7)\,h)}{\vartheta_1(h)}\ge~
			(1+C_7)^\gamma.
			\end{equation*}
			Write  $\xi= 1+C_7$ and recall that the transform Legendre is order reversing, then
			$$ 	\vartheta_2\Big(\frac{h}{\xi}\Big)=\vartheta_1^*\Big(\frac{h}{\xi}\Big)=(\vartheta_1(\xi\,h))^*\le~(\xi^\gamma \vartheta_1(h))^*= \xi^\gamma \vartheta_1^*\Big(\frac{h}{\xi}\Big)= \xi^\gamma\vartheta_2\Big(\frac{h}{\xi^\gamma}\Big). $$
			Applying again the concave conjugate,  we get          	  $$\vartheta_2^*(\xi h)=\Big(\vartheta_2\Big(\frac{h}{\xi}\Big)\Big)^*\ge \Big(\xi^{\gamma}\vartheta_2\Big(\frac{h}{\xi^\gamma}\Big)\Big)^*= \xi^\gamma \vartheta_2^*(h).$$
			Therefore, for $C_8:= (1+C_7)^\gamma-1$,  we have shown that,  for $x_0,x_1$, $y_0\in [0,1]$ with $T(x_1)=x_0$  and  $d(x_0,y_0)<\varrho_{T,\omega}$,  there is $y_1\in T^{-1}(y_0)$, with $d(x_1,y_1)\le d(x_0,y_0)< \varrho_{T,\omega}$, such that
			$$\Omega\left(d(x_0,y_0)\right)\ge \Omega(d(x_1,y_1))+ C_8\,\omega(d(x_1,y_1)).$$
			Inequality \eqref{equation concave conjugate} follows straightforward from the above inequality.
		\end{proof}
		
		For $\omega\in \mathcal M$ and  $\varphi \in \mathscr C^\omega([0,1])$, we  denote
		$$\vert \varphi \vert_\omega =\sup_{x\neq y}\frac{\vert\varphi(x)-\varphi(y)\vert}{\omega(d(x,y))}.$$
		
		\begin{lemma}\label{suma de Birkhoff}
			Let $ g_k(x):= \sup_{T^k(y)=x} S_k\big(f-m(f,T)\big)(y)$, for  $k\ge 1$. Then,
			there is $L=L(\varrho_{T,\omega})> 0$ such that for every $k\ge 1$,
			\begin{align*}
			\vert g_k(x)-g_k(y)\vert & \le L\, C_8^{-1}\,\vert f\vert_\omega\, \Omega(d(x,y)), \qquad & \forall \, x,y\in [0,1] \textrm{ and } \\
			\vert g_k(x)\vert &\le 2\,L\,C_8^{-1}\,\vert f\vert_\omega\, \Omega(1),\qquad  & \forall \, x\in [0,1],
			\end{align*}
			where $\varrho_{T,\omega} $ and $ C_8 $ are as in the statement of Proposition~\ref{lemma concave conjugate}.
		\end{lemma}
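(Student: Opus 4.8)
The plan is to derive both estimates from Proposition~\ref{lemma concave conjugate}: the first by transporting Birkhoff sums along the shadowing backward orbits it produces, the second by feeding the resulting equicontinuity into an iteration argument. (We may assume $f$ is non‑constant, so that $\omega$ and hence $\Omega$ are not identically zero; then, being concave, nondecreasing and vanishing at $0$, $\Omega$ is strictly positive on $(0,+\infty)$.) I anticipate the uniform bound to be the delicate step.

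\emph{The modulus estimate.} Fix $k\ge 1$ and $x,y\in[0,1]$ with $d(x,y)<\varrho_{T,\omega}$. As $T^{-k}(x)$ is finite, pick $y^{*}\in T^{-k}(x)$ attaining the supremum defining $g_{k}(x)$ and put $x_{j}:=T^{k-j}(y^{*})$; then $(x_{j})_{0\le j\le k}$ is a backward orbit with $x_{0}=x$ and $\sum_{j=1}^{k}\big(f(x_{j})-m(f,T)\big)=S_{k}\big(f-m(f,T)\big)(y^{*})=g_{k}(x)$. Extending $(x_{j})$ arbitrarily to an infinite backward orbit and applying Proposition~\ref{lemma concave conjugate} with $y_{0}=y$ produces a backward orbit $(y_{j})$ of $y$ with $C_{8}\sum_{j=1}^{k}\omega\big(d(x_{j},y_{j})\big)\le\Omega\big(d(x,y)\big)$, whence, by the $\omega$‑regularity of $f$,
\begin{align*}
g_{k}(x)=\sum_{j=1}^{k}\big(f(x_{j})-m(f,T)\big) &\le \sum_{j=1}^{k}\big(f(y_{j})-m(f,T)\big)+|f|_{\omega}\sum_{j=1}^{k}\omega\big(d(x_{j},y_{j})\big)\\
&\le g_{k}(y)+C_{8}^{-1}|f|_{\omega}\,\Omega\big(d(x,y)\big).
\end{align*}
Interchanging $x$ and $y$ gives $|g_{k}(x)-g_{k}(y)|\le C_{8}^{-1}|f|_{\omega}\,\Omega(d(x,y))$ whenever $d(x,y)<\varrho_{T,\omega}$. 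For arbitrary $x,y$ I would join them by a chain $x=t_{0},\dots,t_{N}=y$ with $N\le N_{0}:=\lceil\varrho_{T,\omega}^{-1}\rceil$ and each $d(t_{i},t_{i+1})<\varrho_{T,\omega}$, so $|g_{k}(x)-g_{k}(y)|\le N_{0}\,C_{8}^{-1}|f|_{\omega}\,\Omega(1)$; since $\Omega(1)\le\big(\Omega(1)/\Omega(\varrho_{T,\omega})\big)\Omega(d(x,y))$ when $d(x,y)\ge\varrho_{T,\omega}$, the first inequality holds with $L:=\max\{1,\,N_{0}\,\Omega(1)/\Omega(\varrho_{T,\omega})\}$, a constant depending only on $\varrho_{T,\omega}$. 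In particular $\sup_{[0,1]}g_{k}-\inf_{[0,1]}g_{k}\le C$ for every $k$, with $C:=L\,C_{8}^{-1}|f|_{\omega}\,\Omega(1)$.

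\emph{The uniform bound.} Since $T^{k}$ is onto, $\sup_{x}g_{k}(x)=\sup_{y}S_{k}\big(f-m(f,T)\big)(y)=\max_{y}S_{k}f(y)-k\,m(f,T)$; testing against a maximizing measure makes this $\ge0$, while the classical identity $\tfrac1k\max_{y}S_{k}f(y)\to m(f,T)$ makes it $o(k)$. Suppose $\sup_{x}g_{k}(x)>2C$. Then $\sup g_{k}-\inf g_{k}\le C$ forces $g_{k}>C$ on all of $[0,1]$. Writing $g_{(n+1)k}(x)=\sup_{T^{k}y'=x}\big[g_{nk}(y')+S_{k}(f-m(f,T))(y')\big]$ and choosing $y'^{*}\in T^{-k}(x)$ with $S_{k}(f-m(f,T))(y'^{*})=g_{k}(x)$, an induction on $n$ yields $g_{nk}>nC$ on $[0,1]$, hence $\sup_{x}g_{nk}(x)>nC$ for every $n$ — impossible, since $\sup_{x}g_{nk}(x)=o(nk)$ and $C,k$ are fixed. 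Therefore $\sup_{x}g_{k}(x)\le2C$, and together with $\sup_{x}g_{k}(x)\ge0$ and $\sup g_{k}-\inf g_{k}\le C$ this gives $g_{k}([0,1])\subseteq[-C,2C]$, i.e. $|g_{k}|\le2C=2L\,C_{8}^{-1}|f|_{\omega}\,\Omega(1)$.

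\emph{Main obstacle.} The crux is this uniform bound: one should not estimate $g_{k}$ directly but recognise that an oversized $g_{k}$ propagates superadditively to the iterates $g_{nk}$ — precisely because the regularity encoded in $\Omega$ already forces equicontinuity of all the $g_{k}$ — and that this contradicts the sublinear growth of $\max_{y}S_{nk}f$. A secondary point of care is the passage from distances below $\varrho_{T,\omega}$ to arbitrary ones, which uses that $\Omega$ stays bounded away from $0$ on compact subsets of $(0,+\infty)$.
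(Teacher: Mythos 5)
Your proof is correct and follows essentially the same route as the paper: the local estimate comes from Proposition~\ref{lemma concave conjugate} applied to a (near-)maximizing backward orbit, the global estimate from chaining over a finite $\varrho_{T,\omega}$-cover, and the uniform bound from propagating an oversized value of $g_k$ along iterates until it contradicts the sublinear growth of $\max_y S_k f(y)$. The only cosmetic differences are that the paper uses an $\epsilon$-approximate maximizer instead of an exact one and proves your ``classical identity'' inline via weak-star limits of empirical measures rather than citing it.
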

		
		\begin{proof}
			Without loss of generality, we suppose that $m(f,T)=0$. Let $x_0,y_0\in [0,1]$ be  such that $d(x_0,y_0)<\varrho_{T,\omega}$. Fix $k\ge 1$ and assume that $g_k(x_0)\ge g_{k}(y_0)$.
			Given $ \epsilon > 0 $, there exists $x_k\in~T^{-k}(x_0)$ with 	$g_k(x_0) - \epsilon < S_k f(x_k)$. We apply the previous proposition and consider $y_k\in T^{-k}(y_0)$ so that
			$$
			\sum_{j=0}^{k-1}\omega\big(d\big(T^{j}(x_k), T^{j}(y_k)\big)\big) \le C_8^{-1}\Big(
			\Omega\big(d(x_0,y_0)\big)- \Omega\big(d(x_k,y_k)\big)\Big)\le C_8^{-1}\Omega\big(d(x_0,y_0)\big).
			$$
			Thus,
			\begin{align*}
			\vert g_k(x_0)-g_k(y_0)\vert - \epsilon &<  S_k f(x_k)-S_k f(y_k)\\
			& \le \vert f\vert_\omega\sum_{j=0}^{k-1}\omega \big(d\big(T^{j}(x_k), T^{j}(y_k)\big)\big) \le C_8^{-1}\, \vert  f \vert_\omega \,\Omega(d(x_0,y_0)).
			\end{align*}
			Therefore, as $ \epsilon > 0 $ is arbitrary, if $d(x_0,y_0)<\varrho_{T,\omega}$ and $k\ge 1$,
			$$ \vert g_k(x_0)-g_k(y_0)\vert  \le C_8^{-1}\, \vert  f\vert_\omega \,\Omega(d(x_0,y_0)). $$
			For $z\in [0,1]$, define $I_z= (z-\varrho_{T,\omega}/2,z+\varrho_{T,\omega}/2)\cap [0,1]$.
			There are finitely many points  $z_i\in [0,1]$, $1\le i\le L-1$, which are assumed ordered, such that $\{I_{z_i}\}_{i=1}^{L-1}$ is an open cover of $[0,1]$.
			Hence, given $x + \varrho_{T,\omega} \le y$ in $[0,1]$, consider indexes  $i_x<i_y$ for which
			$x\in I_{z_{i_x}} $ and $ y\in I_{z_{i_y}}$. Note that, as  $\Omega$ is increasing, the above local property provides
			\begin{align*}
			\vert g_k(x)-g_k(y)\vert \le & \vert g_k(x)-g_k(z_{i_x})\vert + \sum_{i_x\le i< i_y} \vert g_k(z_{i})-g_k(z_{i+1})\vert+ \vert g_k(z_{i_y})- g_k(y)\vert\\
			\le &  L \,C_8^{-1}\,\vert f\vert_\omega \,\Omega\big(d(x,y)\big).
			\end{align*}
			We have shown that  the family $\{g_k\}_{k\ge 1} $ is equicontinuous. To obtain uniform boundness, denote  $C_9=L\,C_8^{-1}\,\vert f\vert_\omega\, \Omega(1)$. By contradiction, suppose that for some  $\tilde x \in[0,1]$ and $k_0\ge 1$, one has
			$\vert g_{k_0}(\tilde x ) \vert > 2C_9 $. By the previous discussion, we would have
			$
			\vert g_{k_0}(\tilde x)-g_{k_0}(x)\vert \le C_9
			$
			for all $x\in [0,1], $
			so that $\vert g_{k_0}\vert> C_9 $ everywhere.
			Then  there would be a sequence $(\tilde x_\ell)_{\ell\ge 1}$  such that $T^{\ell k_0}(\tilde x_\ell)=\tilde x$ and
			$S_{\ell k_0} f(\tilde x_\ell)>~\ell\,C_9$, hence
			\begin{equation*}
			\frac{1}{\ell\, k_0} S_{\ell k_0} f(\tilde x_\ell) >\frac{C_9}{k_0}>0.
			\end{equation*}
			This contradicts the fact that $m(f,T)=0.$ Indeed, it is easy to see that the Borel probabilities
			$ \nu_\ell = \frac{1}{\ell k_0} \big( \delta_{\tilde x_\ell} + \delta_{T(\tilde x_\ell)} + \ldots + \delta_{T^{\ell k_0 - 1}(\tilde x_\ell)} \big) $ have, with respect
			to the weak-star topology, $T$-invariant measures as accumulation probabilities as $ \ell \to \infty $. Hence, if $ \nu_\infty $ is any one of these accumulation probabilities, then
			$$ m(f, T) \ge \int f \, d\nu_\infty = \lim_{j \to \infty} \frac{1}{\ell_j k_0} S_{\ell_j k_0} f(\tilde x_{k_{\ell_j}}) \ge \frac{C_9}{k_0}. $$
		\end{proof}
		
		\begin{proof}[Proof of Theorem~\ref{theorem existence}]
			Following \cite[Proposition~11]{CLT01}, denote $ g_0 \equiv 0 $ and define, for every $ x\in[0,1] $,
			\begin{equation*}
			U_f(x):= \sup_{k\ge 0}g_k(x) = \sup \Big\{ S_k \big(f - m(f, T)\big) : k \ge 0 \text{ and } T^k(y) = x \Big\}.
			\end{equation*}
			Thanks to Lemma~\ref{suma de Birkhoff}, $ U_f $ is a well-defined real function and actually $U_f \in \mathscr C^{\Omega}([0,1])$.
			Furthermore, it follows from definition that the inequality $ U_f \circ T \ge U_f + f - m(f,T) $ holds and therefore $ U_f $ is a sub-action.
		\end{proof}
		
		\acknowledgment{We are indebted to J. T. A. Gomes for his attentive reading of this appendix.}

	    \bibliographystyle{amsalpha}
	    \bibliography{References}

\end{document}